\definecolor{cobalt}{rgb}{0.0, 0.28, 0.67}
\newtheorem{theorem}{Theorem}[section]
\newtheorem{lemma}[theorem]{Lemma}
\newtheorem{corollary}[theorem]{Corollary}
\newtheorem{proposition}[theorem]{Proposition}
\theoremstyle{definition}
\newtheorem{definition}[theorem]{Definition}
\newtheorem{example}[theorem]{Example}
\newtheorem{remark}[theorem]{Remark}
\newtheorem{question}[theorem]{Question}
\newcommand{\vp}{\varphi}
\def \C{\mathbb{C}}
\newcommand{\clb}{\mathcal{B}}
\newcommand{\clh}{\mathcal{H}}
\newcommand{\cls}{\mathcal{S}}
\newcommand{\clf}{\mathcal{F}}
\newcommand{\clm}{\mathcal{M}}
\newcommand{\cln}{\mathcal{N}}
\newcommand{\gs}{(\mathcal{GS})}
\newcommand{\bh}{\clb(\clh)}
\numberwithin{equation}{section}
\title[Factorization of anti-linear and $C$-normal operators]{Factorization of anti-linear and $C$-normal operators}
\author{Sudip Ranjan Bhuia }
\address{Indian Statistical Institute, Statistics and Mathematics Unit, 8th Mile, Mysore Road, Bangalore, 560059, India}
\email{\textcolor{cobalt}{sudipranjanb@gmail.com}}
\subjclass[2020]{Primary 47A65 ; Secondary 47A68, 47B15}
\keywords{Anti-linear operators, $C$-normal operators, normal operators, Douglas factorization, Polar decomposition, Cartesian decomposition }
\date{\currenttime ;  \today}
\begin{document}
	
	\maketitle
	
\begin{abstract}
	A conjugation $C$ is an anti-linear isometric involution on a complex Hilbert space $\clh$, and $T\in \clb(\clh)$ is conjugate normal if $T^*T = CTT^*C$ holds for some conjugation \(C\). In this paper, we provide a factorization and range inclusion theorem for anti-linear operators, and consequently, establish the polar decomposition for anti-linear operators by applying the Douglas theorem on majorization of Hilbert space operators. Moreover, we present a factorization of $C$-normal operators based on the polar decomposition. Lastly, we study Cartesian decomposition of conjugate normal operators, thereby expanding the results in \cite{RSV_Cart}.
	
\end{abstract}

\section{Introduction}
Throughout the paper, $\clh$ denotes a separable, complex Hilbert space endowed with an inner product $\langle\cdot,\cdot\rangle$  and $\bh$ is the $C^*$-algebra of bounded linear operators on $\clh$.

A map $C:\clh\rightarrow\clh$ is said to be a conjugation if 
\begin{enumerate}
	\item $C(\alpha x+y)=\bar{\alpha}Cx+Cy$ for all $\alpha\in \C$ and for all $x,y\in \clh$,
	\item $C^2=I$, and
	\item $\langle Cx,Cy\rangle=\langle y,x\rangle$ for all $x,y\in \clh$. 
\end{enumerate} 
That is, $C$ is an anti-linear, isometric, involution. Denote $\clb_{a}(\clh)$ by the collection of all anti-linear bounded operators on $\clh$.
\begin{definition}\label{antilinear operator adjoint}
	For $X\in \clb_{a}(\clh)$, there is a unique $X^{\#}\in \clb_{a}(\clh)$ called the anti-linear adjoint of $X$, if it satisfies the following relation
	\begin{equation}
		\langle Xx,y \rangle =\overline{\langle x, X^{\#}y \rangle}=\langle  X^{\#}y,x \rangle
	\end{equation}
	for all $x,y \in \mathcal{H}$.
\end{definition}\cite{Ptak}
The anti-linear operator $X$ is called anti-linear self-adjoint if $X^{\#}=X$, and anti-linearly normal if $X^{\#}X=XX^{\#}$. Conjugations are examples of anti-linear self-adjoint operators.

The study of conjugations has its roots in physics and is also essential in operator theory and operator algebras.
Conjugations and the extension theory for unbounded symmetric operators are closely related concepts. A survey article titled ``Mathematical and physical aspects of complex symmetric operators'' by Garcia, Prodan, and Putinar \cite{Garcia:Prodan:Putinar} is referenced as a source for further classical remarks on conjugations.

The study of complex symmetric operators involves conjugations in a significant way. Its structure is described by the term "complex symmetric." An operator $T$ is said to complex symmetric if and only if it has a symmetric matrix representation with respect to some orthonormal basis of $\clh$. Here is the formal definition.
\begin{definition}\label{complex symmetric}
	Let $T\in \clb(\clh)$ and $C$ be a conjugation on $\clh$. Then $T$ is said to be $C$-symmetric if $T^*=CTC$. Equivalently, $T^*C=CT$, or $TC=CT^*$, and $T$ is said to be a $C$-skew-symmetric if $CT^*C=-T$. 
\end{definition}
Thus a bounded linear operator $T$ is complex symmetric if there exists a conjugation $C$ with respect to which it is $C$-symmetric.
Let $\cls_{C}(\clh)=\{T\in \clb(\clh) : T^*=CTC \}$ be the space of all $C$-symmetric operators. $T$ is
complex symmetric provided $T\in \cls_{C}(\clh)$ for some conjugation $C$. Garcia, and Putinar \cite{Garcia:Putinar-1,Garcia:Putinar-2,Garcia:Prodan:Putinar,Garcia:AAEPRI} have initiated the study of complex symmetric operators in a systematic way. They have shown that the class is remarkably extensive and contains the normal operators, the Hankel operators, the compressed Toeplitz operators, several integral operators. The complex symmetric weighted shift is characterized by S. Zhu and C.G. Li in \cite{Zhu:Li: symmetric weighted shift}. For more details on complex skew-symmetric operators readers are referred to \cite{Li: Zhu, Sen Zhu: Skew weighted shift}

In their recent paper \cite{Ptak}, by providing an intriguing generalization, Ptak, Simik, and Wicher provide a comprehensive framework for the study of complex symmetric and skew-symmetric operators. 
	\begin{definition}\label{conjugate normal}
	An operator $T\in \bh$ is said to be conjugate normal if $C|T|C=|T^*|$ for some conjugation $C$ (in this case, $T$ is said to be $C$-normal).
\end{definition}

Hence, it is evident that when $T$ is $C$-normal, the equality $C\ker T=\ker T^*$ holds implying that any injective $C$-normal operator possesses a dense range (in literature such operators are know as quasi-affine operators). It is very easy to check that complex symmetric and skew-symmetric operators are conjugate normal.
By $T\in \gs$, we mean $T$ is conjugate normal for some conjugation $C$. For a fixed conjugation, we denote $\cln_C(\clh)$ by the set of all $C$-normal operators on $\clh$. There has been a growing interest in the study of conjugate normal operators and became recent trends in operator theory. Very recently Ramesh, Sudip, and Venku Naidu \cite{RSV_Cart,RSVcptrpn} have studied Cartesian decomposition of conjugate normal operators, spectral representation of compact conjugate normal operators, and they have introduced the eigenvalue problem for conjugate normal operators. For more example and properties of conjugate normal operators, we refer \cite{Bhuia,Ko et. al,Wang:Zhu}.

In \cite{Liu:Zhu-1,Liu:Zhu-2,Wang:Xie:Yan:Zhu}, authors have considered the interpolation problem for conjugations, that is, they have investigated the pairs of operator $(P,Q)$ satisfying $CPC=Q$ for some conjugation $C$ whenever $P,Q$ are orthogonal projections, simultaneously diagonal operators, and commuting self-adjoint operators.

 In a very recent paper \cite{Liu:Zhu-1}, Liu, Shi, Wang, and Zhu gave a complete characterization of conjugate normal partial isometries by solving the interpolation problem for conjugation (\cite[Problem 1.2]{Liu:Zhu-2}) with pair of partial isometries.

In \cite{Liu:Zhu-2}, authors have solved the interpolation problem for pair $(P,Q)$ of simultaneously diagonal operators as a consequence, they have obtained a general characterization of conjugate normal weighted shifts. Additionally, they have noted that the problem of figuring out if an operator is conjugate normal is really a specific instance of the interpolation problem for conjugation. 

In \cite{Wang:Xie:Yan:Zhu}, authors have solved the interpolation problem for conjugation for a pair $(P,Q)$ of self-adjoint operators with $PQ=QP$ and as an application they have obtained some results on factorization of unitary operators. Also, authors have shown that if $T\in \bh$ and $T^2=0$, then $T$ is conjugate normal as corollary to their main results.

It follows that the interpolation problem for conjugations plays a significant part in characterizing conjugate normal operators.

In this paper, our main aim is to give the factorization of conjugate normal operators based on the polar decomposition theorem (cf. \cref{cpolar}).

Organization of this paper: In Section $2$, preliminaries results are discussed which will be used in the subsequent sections. Using the Douglas famous theorem, we give factorization and range inclusion theorem for anti-linear operators in Section $3$. In Section $4$, we give factorization of conjugate (or, anti-linear) normal operators using multiplicity theory of normal operators. In Section $5$, we give a different proof of Cartesian decomposition of conjugate normal operators and its characterization. Alongside, we give an application of Cartesian decomposition of compact conjugate normal operators to singular value inequalities.

\section{Preliminaries}
For any bounded linear operator $A$ between linear spaces, the range and the null space of $A$ are denoted by $\text{ran}(A)$ and $\ker(A)$, respectively.

In 1966, R. G. Douglas established the following celebrated assertion, known as the Douglas theorem or Douglas majorization theorem.
\begin{theorem}[Douglas Theorem]\cite[Theorem 1]{Douglas Lemma}
	 If $A, B \in \clb(\clh)$, then the following statements are equivalent:
	\begin{enumerate}
		\item[{\rm (i)}] $\text{ran}(A) \subseteq \text{ran}(B)$;
		\item[{\rm (ii)}] $A=BC$ for some $C \in \clb(\clh)$;
		\item[{\rm (iii)}]$AA^* \leq k^2 BB^*$ for some $k\geq 0$.
	\end{enumerate}
	Moreover, if {\rm (i)}, {\rm (ii)}, and {\rm (iii)} are valid, then there exists a unique operator $C$ so that
	\begin{enumerate}
		\item[{\rm (a)}] $\|C\|^2=\inf\{\lambda|AA^*\leq \lambda BB^*\}$;
		\item[{\rm (b)}] $\ker(A) = \ker(C)$;
		\item[{\rm (c)}] $\text{ran}(C) \subseteq \overline{\text{ran}}(B^{*})$.
	\end{enumerate}
\end{theorem}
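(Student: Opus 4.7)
The natural route is to prove the equivalence $(\mathrm{ii}) \Leftrightarrow (\mathrm{iii})$ as the central step, pick up $(\mathrm{i})$ at the end, and simultaneously construct a canonical $C$ that satisfies (a)--(c). The two easy directions are $(\mathrm{ii}) \Rightarrow (\mathrm{i})$, from $\text{ran}(BC) \subseteq \text{ran}(B)$, and $(\mathrm{ii}) \Rightarrow (\mathrm{iii})$ from $AA^* = BCC^*B^* \le \|C\|^2 BB^*$; this last computation also delivers the inequality $\inf\{\lambda : AA^* \le \lambda BB^*\} \le \|C\|^2$, which is half of (a).

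The heart of the theorem is $(\mathrm{iii}) \Rightarrow (\mathrm{ii})$, and this is where I would concentrate effort, since it is also the main obstacle. Assuming $AA^* \le k^2 BB^*$, for every $y \in \clh$ one has $\|A^* y\|^2 = \langle AA^* y, y\rangle \le k^2 \|B^* y\|^2$, converting the operator inequality into a vectorwise norm estimate. I will define a preliminary map $D_0 \colon \text{ran}(B^*) \to \clh$ by $D_0(B^* y) := A^* y$; this estimate makes $D_0$ well defined (since $B^* y = 0$ forces $A^* y = 0$) and bounded with norm at most $k$. I will then extend $D_0$ by continuity to $\overline{\text{ran}(B^*)}$ and by zero on $\ker(B) = \overline{\text{ran}(B^*)}^{\perp}$ to obtain $D \in \bh$ with $\|D\| \le k$ and $DB^* = A^*$. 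Setting $C := D^*$ then yields $BC = A$ and $\|C\| \le k$; combined with the reverse inequality noted above, this gives (a).

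With this explicit $C$ in hand, the supplementary properties follow quickly. Since $D$ vanishes on $\ker(B)$, the range of $C = D^*$ lies in $(\ker D)^{\perp} \subseteq \ker(B)^{\perp} = \overline{\text{ran}(B^*)}$, proving (c). For (b), the relation $BC = A$ gives $\ker(C) \subseteq \ker(A)$, while conversely $Ax = 0$ forces $Cx \in \ker(B) \cap \overline{\text{ran}(B^*)} = \{0\}$. Uniqueness of $C$ subject to (c) follows because any two such solutions differ by an operator whose range sits inside $\ker(B) \cap \overline{\text{ran}(B^*)} = \{0\}$. Finally, the implication $(\mathrm{i}) \Rightarrow (\mathrm{ii})$, if one prefers to close the cycle without routing through (iii), can be obtained from the closed graph theorem: for each $x$ pick the unique $y \in \overline{\text{ran}(B^*)}$ with $By = Ax$ (which exists by (i) after projecting onto $\ker(B)^{\perp}$), and set $Cx := y$; boundedness is immediate from closedness of the graph.
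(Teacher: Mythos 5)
Your argument is correct and complete: it is essentially Douglas's original proof (the paper itself states this theorem as a cited preliminary from \cite{Douglas Lemma} and gives no proof of its own). All the key points check out --- the well-definedness and boundedness of $D_0$ on $\text{ran}(B^*)$ from the vectorwise estimate $\|A^*y\|\le k\|B^*y\|$, the extension by zero on $\ker(B)=\overline{\text{ran}(B^*)}^{\perp}$, the identification $\|C\|^2=\inf\{\lambda : AA^*\le\lambda BB^*\}$ from the two inequalities, the kernel and range statements, uniqueness via $\ker(B)\cap\overline{\text{ran}(B^*)}=\{0\}$, and the closed-graph route for $(\mathrm{i})\Rightarrow(\mathrm{ii})$.
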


The following result gives the description of anti-linear operators that commute with normal operator. 
	\begin{lemma}\cite[Lemma 3.3]{Gilbreath:Wogen}\label{anti-commut-normal}
		Suppose that $K$ is a conjugation and that the normal operator $N$ lies
		in $\cls_{K}(\clh)$. Then $\{ A\in \clb_{a}(\clh):AN=NA \}=\{TK:T\in \clb(\clh)\,\,\text{and}\,\, TN^*=NT  \}$.
	\end{lemma}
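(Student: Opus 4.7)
The plan is to exhibit a bijection between the two sets via the map $A \mapsto T := AK$, with inverse $T \mapsto TK$. Because $K$ is anti-linear with $K^2 = I$, the composition of two anti-linear operators is $\C$-linear while the composition of an anti-linear operator with a $\C$-linear one is anti-linear; hence $A \in \clb_{a}(\clh)$ if and only if $AK \in \bh$, and these two assignments are mutually inverse. Accordingly, the entire lemma reduces to matching the commutation relation $AN = NA$ (on the anti-linear side) with the relation $TN^* = NT$ (on the linear side) through this identification.

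First I would reformulate the hypothesis $N \in \cls_K(\clh)$, namely $N^* = KNK$, as the two sliding identities
\[
KN = N^*K \qquad\text{and}\qquad NK = KN^*,
\]
both obtained by multiplying $N^* = KNK$ on the right (resp.\ left) by $K$ and invoking $K^2 = I$. These identities are the technical tool that lets me move $K$ past $N$ at the price of turning $N$ into $N^*$, and they will be the only facts about the $K$-symmetry of $N$ that enter the proof.

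Next, for the inclusion $\supseteq$, given $T \in \bh$ with $TN^* = NT$, I set $A = TK$ and compute $AN = TKN = T(N^*K) = (NT)K = NA$, which uses the first identity. For the inclusion $\subseteq$, given an anti-linear $A$ with $AN = NA$, I set $T = AK$ (so $T \in \bh$ and $TK = AK^2 = A$), and then $NT = NAK = (AN)K = A(NK) = A(KN^*) = TN^*$, which uses the second identity.

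I do not anticipate any substantive obstacle: once the reformulation of $K$-symmetry into the two sliding identities is in hand, both inclusions collapse to a one-line calculation. The only item one must be attentive to is that every manipulation above is genuine associativity of operator composition and does not implicitly use $\C$-linearity of $A$ anywhere; since $KN$, $NK$, and scalar-free compositions like $(AN)K$ or $A(NK)$ all make sense for anti-linear $A$, the anti-linear nature of $A$ is compatible with every step.
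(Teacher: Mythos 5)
Your argument is correct and complete: the two ``sliding identities'' $KN=N^*K$ and $NK=KN^*$ are exactly what is needed, and both inclusions follow by the one-line computations you give, with all steps being pure associativity of composition (so the anti-linearity of $A$ causes no trouble). Note that the paper itself states this lemma as a citation to Gilbreath--Wogen without reproducing a proof; your verification is the standard argument underlying that reference, so there is nothing to reconcile.
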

A similar description as above can be concluded about anti-linear operators that commute with complex symmetric operator. That is, if an anti-linear operator $A$ commute with a $C$-symmetric operator $T$, then $A=SC$ and $ST^*=TS$ for some $S\in \bh$.


	The following are equivalent conditions for a $C$-normal operator. 
	\begin{theorem}\cite[Theorem 2.3]{Ptak}\label{ptak_charac}
		Let $C$ be a conjugation on $\clh$, and let $T\in \clb(\clh)$. Then the following are equivalent:
		\begin{enumerate}
			\item $T$ is C-normal,
			\item $T^*$ is $C$-normal.
			\item $CTC$ is $C$-normal.
			\item $CT^*C$ is $C$-normal.
			\item $CT^*T=TT^*C$.
			\item $CT(CT)^{\#}=(CT)^{\#}CT$.
			\item\label{norm condition C-normal} $\left\|TCx\right\|=\left\|T^*x\right\|$ for $x\in \clh$.
			\item $\left\|T^*Cx\right\|=\left\|Tx\right\|$ for $x\in \clh$.
			\item $T_{+}:=\frac{1}{2}(CT+T^*C)$ and $T_{-}:=\frac{1}{2}(CT-T^*C)$ commute.
			\item $T_{+}:=\frac{1}{2}(TC+CT^*)$ and $T_{-}:=\frac{1}{2}(TC-CT^*)$ commute.
		\end{enumerate}	
	\end{theorem}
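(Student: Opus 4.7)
The plan is to establish the entire chain by reducing every condition to the single operator equation $CT^*TC = TT^*$, which is just the squared form of the defining identity $C|T|C=|T^*|$. First I would record two reusable facts about conjugations: $CAC$ is a positive linear operator whenever $A$ is, and $C\sqrt{A}\,C = \sqrt{CAC}$, the latter following by squaring and uniqueness of positive square roots together with $C^2=I$. Applied to $A=T^*T$ this gives the equivalence of (1) with $CT^*TC=TT^*$, and the latter is equivalent to (5) by multiplying on the right by $C$.

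Next I would dispense with (1)--(4) and (6) by short algebra. A direct inner-product check gives $(CTC)^*=CT^*C$, and then replacing $T$ by $T^*$, $CTC$, or $CT^*C$ inside the identity $CT^*TC=TT^*$ and simplifying with $C^2=I$ produces the other three versions; these are simply four rewrites of the same equation. For (6), the anti-linear adjoint works out to $(CT)^{\#}=T^*C$ directly from \cref{antilinear operator adjoint}, so (6) becomes $CT\cdot T^*C = T^*C\cdot CT$, i.e.\ $CTT^*C=T^*T$, which is (2).

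For (7) and (8) the route is polarization. Using $\langle Cu,Cv\rangle=\langle v,u\rangle$ and $C^2=I$ one computes
\[
\|TCx\|^2=\langle T^*TCx,Cx\rangle=\langle CT^*TCx,x\rangle,
\]
while $\|T^*x\|^2=\langle TT^*x,x\rangle$. Since $CT^*TC$ and $TT^*$ are both self-adjoint, equality of these quadratic forms for every $x$ is equivalent to the operator identity $CT^*TC=TT^*$; the same manipulation with $T$ and $T^*$ interchanged handles (8).

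Finally for (9) and (10) I would just expand the commutator of the two anti-linear operators. Using $C^2=I$,
\[
T_+T_- - T_-T_+ = \tfrac14\bigl((CT+T^*C)(CT-T^*C)-(CT-T^*C)(CT+T^*C)\bigr)=\tfrac12\bigl(T^*T-CTT^*C\bigr),
\]
so $[T_+,T_-]=0$ iff $T^*T=CTT^*C$, which is (2); condition (10) is analogous with $C$ placed on the other side. The hard part is purely bookkeeping: keeping track of which intermediate products are linear versus anti-linear (so that ordinary adjoints and polarization are legitimate), and invoking $\langle Cu,Cv\rangle=\langle v,u\rangle$, $C^2=I$, and $(CTC)^*=CT^*C$ in the right places. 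Once these tools are lined up, each of the ten equivalences reduces to a one-line rearrangement of $CT^*TC=TT^*$.
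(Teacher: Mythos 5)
Your proposal is correct and complete: the reduction of every condition to the single identity $CT^*TC=TT^*$ (equivalent to $C|T|C=|T^*|$ by positivity of $C|T|C$ and uniqueness of positive square roots), together with the facts $(CBC)^*=CB^*C$ and $(CT)^{\#}=T^*C$, handles all ten equivalences, and your commutator expansions for (9) and (10) check out. The paper itself states this theorem as a citation of Ptak--Simik--Wicher without reproducing a proof, and your argument is essentially the standard one from that source.
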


	\begin{lemma}\cite[Lemma 2.4]{Ptak}
		Let $C$ be a conjugation on $\clh$. If $T\in \mathcal{B}(\clh)$ is $C$-normal, then $T_{L}=CTCT$ and $T_{R}=TCTC$ are normal operators.
	\end{lemma}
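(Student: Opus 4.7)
The plan is to verify normality of $T_L$ and $T_R$ by direct computation of $T_L^*T_L$, $T_LT_L^*$, $T_R^*T_R$, and $T_RT_R^*$. The only input I will need is the characterization of $C$-normality from \cref{ptak_charac}(5), which I rewrite (using $C^2=I$) in the symmetric swap form $CT^*TC = TT^*$ and $CTT^*C = T^*T$. Conceptually: conjugating $T^*T$ or $TT^*$ by $C$ on both sides swaps the two.

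The first step is to pin down the linear adjoints of $T_L = CTCT$ and $T_R = TCTC$. These compositions are genuinely linear operators (anti-linear composed with anti-linear yields linear), but $C$ itself is anti-linear, so the rule $(AB)^* = B^*A^*$ cannot be applied mechanically across $C$. Using $\langle Tu,v\rangle = \langle u, T^*v\rangle$ together with the identity $\langle Cu,v\rangle = \langle Cv,u\rangle$ (a direct consequence of $\langle Cx,Cy\rangle = \langle y,x\rangle$), a short inner-product calculation yields $(CTCT)^* = T^*CT^*C$ and $(TCTC)^* = CT^*CT^*$.

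The second step is pure symbol pushing. For $T_L$, cancelling the central $C^2 = I$ and then applying the swap identity once reduces both products to $(T^*T)^2$: one has $T_L^*T_L = (T^*CT^*C)(CTCT) = T^*(CT^*TC)T = T^*(TT^*)T = (T^*T)^2$, while $T_LT_L^* = (CTCT)(T^*CT^*C) = CT(CTT^*C)T^*C = CT(T^*T)T^*C = C(TT^*)^2 C$; inserting $C^2 = I$ between the two copies of $TT^*$ yields $C(TT^*)^2 C = (CTT^*C)(CTT^*C) = (T^*T)^2$. The calculation for $T_R$ is completely symmetric and collapses both $T_RT_R^*$ and $T_R^*T_R$ to $(TT^*)^2$.

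The only step at which the argument could stumble is the adjoint computation: because $C$ is anti-linear, the naive reverse-order rule fails, and one has to alternate the two bra-ket identities above carefully. Once the formulas $(CTCT)^* = T^*CT^*C$ and $(TCTC)^* = CT^*CT^*$ are in hand, the remainder of the proof is mechanical substitution via the swap identities $CT^*TC = TT^*$ and $CTT^*C = T^*T$.
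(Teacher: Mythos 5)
Your computation is correct: the adjoint formulas $(CTCT)^{*}=T^{*}CT^{*}C$ and $(TCTC)^{*}=CT^{*}CT^{*}$ are right (the inner-product manipulation via $\langle Cu,v\rangle=\langle Cv,u\rangle$ is exactly what is needed, since the reverse-order rule does not apply blindly across the anti-linear $C$), and the swap identities $CT^{*}TC=TT^{*}$, $CTT^{*}C=T^{*}T$ then reduce both products to $(T^{*}T)^{2}$ for $T_{L}$ and to $(TT^{*})^{2}$ for $T_{R}$. The paper only cites this lemma from \cite{Ptak} without reproducing a proof, and your direct verification is the standard argument for it.
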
	
	\begin{remark}\label{c-symmetric from c-normal}
		The following gives a way to construct $C$-symmetric operator and $C$-skew-symmetric operator given a $C$-normal operator.
		\begin{enumerate}
			\item If $T$ is $C$-normal, then $S_1=T^*T-TT^*$ is $C$-skew-symmetric.
			\item If $T$ is $C$-normal, then $S_2=T^*T+TT^*$ is $C$-symmetric.\\
		\end{enumerate}
	\end{remark}

\begin{theorem}\cite[Theorem 1.6]{Wang:Zhu}\label{cpolar}
	Let $C$ be a conjugation on $\mathcal{H}$ and $T \in \mathcal{B(H)}$. Then the following are
	equivalent:
	\begin{enumerate}
		\item $T$ is $C$-normal.
		\item $T = CJ|T|$ for some partial anti-unitary operator $J$ supported on $\text{ran}|T|$ with
		$J|T| = |T|J$.
		\item $T =C\tilde{J}|T| $ for some anti-unitary operator $\tilde{J}$ with $\tilde{J}|T|=|T|\tilde{J}$.
	\end{enumerate}
\end{theorem}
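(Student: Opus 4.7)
The plan is to prove the circle of implications (1) $\Rightarrow$ (2) $\Rightarrow$ (3) $\Rightarrow$ (1). The key observation is that polar decomposition provides a canonical linear partial isometry $U$ with $T = U|T|$, and composing with $C$ converts it into an anti-linear object; the $C$-normality condition $C|T|C = |T^*|$ is precisely what is needed to make this anti-linear part commute with $|T|$.

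For (1) $\Rightarrow$ (2), I would start with the usual polar decomposition $T = U|T|$, where $U$ is the partial isometry with initial space $\overline{\mathrm{ran}\,|T|}$ (and vanishing on $\ker|T|$), and set $J := CU$. Since $C$ is anti-linear isometric and $U$ is a partial isometry, $J$ is automatically a partial anti-unitary supported on $\overline{\mathrm{ran}\,|T|}$: for $x,y$ in that subspace one has $\langle Jx,Jy\rangle = \langle CUx,CUy\rangle = \langle Uy,Ux\rangle = \langle y,x\rangle$, and $J$ vanishes on $\ker|T|$. The identity $T = CJ|T|$ is immediate from $C^2 = I$. The commutation $J|T| = |T|J$ reduces to $CU|T| = |T|CU$, i.e.\ $CT = |T|CU$; applying $C$ on the left turns this into $T = C|T|CU = |T^*|U$, which is the standard reverse polar identity $|T^*|U = T$ available because $|T^*| = U|T|U^*$. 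The $C$-normality hypothesis $|T^*| = C|T|C$ is exactly what allows us to substitute $C|T|C$ for $|T^*|$, closing the loop.

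For (2) $\Rightarrow$ (3), the task is to extend the partial anti-unitary $J$, which vanishes on $\ker|T|$, to a genuine anti-unitary $\tilde J$ on all of $\clh$ while preserving commutation with $|T|$. Since $\ker|T|$ is a separable Hilbert space, it carries some conjugation $C_0$, and setting $\tilde J := J \oplus C_0$ with respect to $\clh = \overline{\mathrm{ran}\,|T|}\oplus \ker|T|$ yields an anti-unitary on $\clh$. Both summands are invariant under $|T|$ (the second because $|T|$ annihilates $\ker|T|$), so $\tilde J|T| = |T|\tilde J$ follows by checking on each summand: on $\overline{\mathrm{ran}\,|T|}$ it is the given relation for $J$, and on $\ker|T|$ both sides vanish.

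For (3) $\Rightarrow$ (1), write $V := C\tilde J$, which is a composition of two anti-unitaries, hence a linear unitary on $\clh$ (a short calculation with inner products gives $V^* = \tilde J^{-1} C$ and $VV^* = V^*V = I$). Then $T = V|T|$, so $|T^*|^2 = TT^* = V|T|^2 V^*$, whence $|T^*| = V|T|V^* = C\tilde J\,|T|\,\tilde J^{-1}C$. Using the given commutation $\tilde J|T| = |T|\tilde J$ to cancel $\tilde J$ with $\tilde J^{-1}$ around $|T|$ collapses this to $|T^*| = C|T|C$, which is $C$-normality. The only mildly subtle step in the whole argument is the bookkeeping with anti-linear adjoints when verifying that $V = C\tilde J$ is unitary, but once that is done each implication is a one-line consequence of polar decomposition plus the commutation.
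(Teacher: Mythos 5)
Your argument is correct, but it takes a genuinely different route from the paper's. The paper deliberately derives this result (in Section 3) from the Douglas majorization theorem: it sets $A=CT^*C$ and $B=|T^*|$, observes that $C$-normality gives $AA^*=BB^*$, and extracts the partial isometry $D$ from Douglas's factorization $A=BD^*$, so that $J=DC$; the kernel/range conditions of Douglas's theorem then deliver the support statement, and the commutation $J|T|=|T|J$ is obtained at the end from $|T^*|=CJ|T|J^{\#}C$. You instead start from the classical polar decomposition $T=U|T|$, set $J=CU$, and reduce the commutation to the reverse polar identity $|T^*|U=T$ combined with $C|T|C=|T^*|$ — this is more elementary and makes the role of $C$-normality completely transparent; you also handle the extension to a genuine anti-unitary in (3), which the paper's proof does not spell out. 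One small point you should close: in (1) $\Rightarrow$ (2) you verify that $J=CU$ is isometric on $\overline{\text{ran}}\,|T|$ and vanishes on $\ker|T|$, but for $J$ to be \emph{supported on} $\overline{\text{ran}}\,|T|$ in the sense used here (and for your direct-sum extension $\tilde J=J\oplus C_0$ in (2) $\Rightarrow$ (3) to be anti-unitary) you also need $J$ to map $\overline{\text{ran}}\,|T|$ \emph{onto} itself. This is true but uses $C$-normality once more: $J(\overline{\text{ran}}\,|T|)=C(\overline{\text{ran}}\,T)=C\bigl((\ker T^*)^{\perp}\bigr)=(C\ker T^*)^{\perp}=(\ker T)^{\perp}=\overline{\text{ran}}\,|T|$, since $C\ker T=\ker T^*$ for a $C$-normal operator. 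With that line added, each implication is complete.
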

The following is a simple consequence of the above theorem which is parallel to the case of complex symmetric operators (cf. \cite[Theorem 3]{garciaCSPI}).
\begin{remark}
	If $T$ is a C-normal partial isometry, then $T$ can be written as a product of a $C$-normal unitary operator $U$ and a projection $P$ that is, $T=UP$.
\end{remark}
\begin{proposition}\cite[Proposition 4.3]{Wang:Zhu}\label{cnormaldirectsum}
	Let $\clm$ be a subspace of $\clh$ and $T\in \clb(\clh)$ with 
	\begin{align}
	\begin{pmatrix}
	A&0\\
	0&B
	\end{pmatrix},
	\end{align}
	where $A:\clm\rightarrow \clm$ is a diagonal operator. Then $T\in \gs$ if and only if $B:\clm^{\perp}\rightarrow \clm^{\perp}\in\gs$.
\end{proposition}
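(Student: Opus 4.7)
The plan is to prove both implications, with $(\Leftarrow)$ being the direct construction and $(\Rightarrow)$ being the more delicate step.

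For $(\Leftarrow)$, since $A$ is diagonal, I would fix an orthonormal eigenbasis $\{e_n\}$ of $\clm$ with $A e_n = \lambda_n e_n$ and define an anti-linear involution $C_1$ on $\clm$ by $C_1\bigl(\sum \alpha_n e_n\bigr) := \sum \overline{\alpha_n} e_n$. A direct computation on the basis gives $C_1 A C_1 e_n = \overline{\lambda_n} e_n = A^* e_n$, so $C_1 A C_1 = A^*$ and $A$ is $C_1$-symmetric. Squaring yields $C_1 |A|^2 C_1 = C_1 A^* C_1 \cdot C_1 A C_1 = A A^* = |A^*|^2$, and uniqueness of the positive square root gives $C_1 |A| C_1 = |A^*| = |A|$ (using normality of $A$). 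Setting $C := C_1 \oplus C_2$ with $C_2$ the given conjugation making $B$ $C_2$-normal, $C$ is a conjugation on $\clh$, and
\[
C |T| C = C_1 |A| C_1 \oplus C_2 |B| C_2 = |A| \oplus |B^*| = |T^*|,
\]
so $T \in \gs$.

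For $(\Rightarrow)$, suppose $T$ is $C$-normal for some conjugation $C$ on $\clh$. The goal is to produce a new conjugation $C'$ on $\clh$ which respects the decomposition $\clh = \clm \oplus \clm^\perp$, i.e., $C' = C_1' \oplus C_2'$, and which still makes $T$ conjugate normal; the restriction $C_2'$ then makes $B$ conjugate normal and the conclusion follows. Using \cref{cpolar}, there is an anti-unitary $\tilde{J}$ with $T = C \tilde{J} |T|$ and $\tilde{J} |T| = |T| \tilde{J}$. Take $C_1'$ on $\clm$ to be the natural conjugation from the eigenbasis $\{e_n\}$ of $A$, which commutes with $|A|$. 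The construction of $C_2'$ proceeds spectral-subspace by spectral-subspace of $|T| = |A| \oplus |B|$: on each $\ker(|T|^2 - t)$, the diagonal structure of $|A|$ provides a canonical orthogonal splitting of the $\clm$-part from the $\clm^\perp$-part, and the commutation $\tilde{J}|T| = |T|\tilde{J}$ lets one adjust $\tilde{J}$ within the commutant of $|T|$ to one that restricts separately to an anti-unitary on $\clm$ and on $\clm^\perp$.

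The main obstacle is this last adjustment: a priori, the conjugation $C$ can mix $\clm$ and $\clm^\perp$, so decoupling it requires showing that every spectral subspace of $|T|$ at value $t$ admits an orthogonal splitting into a $\clm$-part (compatible with $C_1'$) and a $\clm^\perp$-part that is carried bijectively by the required conjugation to the corresponding spectral subspace of $|T^*|$. The diagonal hypothesis on $A$ is essential here: the explicit eigenbasis $\{e_n\}$ gives canonical conjugations on each eigenspace of $|A|$, and this rigidity — not available for a general normal $A$ with continuous spectrum — is what allows one to prescribe the $\clm$-part of the new conjugation freely and deduce the existence of $C_2'$ with $C_2' |B| C_2' = |B^*|$. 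The key geometric input that prevents the kind of pathology visible for non-conjugate-normal $B$ (e.g., a strict inclusion between $\ker T$ and $\ker T^*$ obstructing any conjugation) is precisely that the diagonal structure of $A$ produces matching multiplicities between $|T|$ and $|T^*|$ on every common spectral subspace, including the kernel.
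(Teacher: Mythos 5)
First, note that the paper does not prove this proposition at all: it is quoted verbatim from \cite[Proposition 4.3]{Wang:Zhu} as a preliminary, so there is no in-paper argument to compare yours against. Judged on its own, your $(\Leftarrow)$ direction is correct and complete: the coordinatewise conjugation $C_1$ attached to the eigenbasis of $A$ satisfies $C_1|A|C_1=|A|=|A^*|$, and $C_1\oplus C_2$ does the job. (In fact this direction needs nothing beyond $A\in\gs$: if $A$ is $C_1$-normal and $B$ is $C_2$-normal, then $A\oplus B$ is $(C_1\oplus C_2)$-normal.)

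The $(\Rightarrow)$ direction, however, is not a proof. The entire difficulty of the statement is concentrated in the step you yourself label ``the main obstacle'': a conjugation $C$ with $C(|A|\oplus|B|)C=|A|\oplus|B^*|$ may genuinely mix $\clm$ and $\clm^\perp$, and one must extract from it a conjugation $C_2$ on $\clm^\perp$ alone with $C_2|B|C_2=|B^*|$. Your text describes the desired outcome (``adjust $\tilde J$ within the commutant of $|T|$'', ``canonical orthogonal splitting'') but carries out no construction and verifies nothing; moreover the appeal to \cref{cpolar} is circular here, since requiring the adjusted anti-unitary to commute with $|T|$ is equivalent to requiring the new conjugation to intertwine $|T|$ and $|T^*|$, which is the problem you started with. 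The closing heuristic also points at the wrong invariant: ``matching multiplicities on every common spectral subspace'' amounts to unitary equivalence of $|T|$ and $|T^*|$, and that is \emph{not} sufficient for the existence of an intertwining conjugation. For instance, for $B=e_1\otimes e_2+2\,e_2\otimes e_3$ on $\C^3$ one has $|B|=\mathrm{diag}(0,1,2)$ and $|B^*|=\mathrm{diag}(1,2,0)$, which are unitarily equivalent, yet no conjugation $C_2$ satisfies $C_2|B|C_2=|B^*|$: since $C_2^2=I$, such a $C_2$ would map each eigenspace $E_{|B|}(\lambda)$ onto $E_{|B^*|}(\lambda)$ \emph{and} back, forcing $C_2(\mathrm{span}\,e_3)$ to equal both $\mathrm{span}\,e_1$ (from $\lambda=0$) and $\mathrm{span}\,e_2$ (from $\lambda=2$). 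Any correct proof of $(\Rightarrow)$ must track exactly this finer, involution-induced constraint---$C$ preserves $E_{|T|}(\Delta)\cap E_{|T^*|}(\Delta)$ and must pair the complementary pieces coherently over all Borel sets $\Delta$ simultaneously---and show that it descends from the pair $(|T|,|T^*|)$ to the pair $(|B|,|B^*|)$ because the $\clm$-component always sits inside the intersection (as $|A|=|A^*|$ on $\clm$). None of this appears in your argument, so the forward implication remains unproved.
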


If $\mu$ is a regular Borel measure on $\C$ with compact support, define $N_\mu$ on $L^2(\mu)$ by 
\begin{equation}
N_\mu f=zf
\end{equation} for each $f\in L^2(\mu)$. It is easy to check
that $N^*_\mu f=\bar{z}f$ and, hence, $N_\mu$ is normal. The the following are true:
\begin{enumerate}
	\item $\sigma(N_\mu)=$ support of $\mu$.
	\item  If, for a bounded Borel function $\vp$, we define $M_\vp$ on $L^2(\mu)$ by $M_\vp f=\vp f$,
	then $\vp(N_\mu)=M_\vp$.
	\item If $E$ is the spectral measure for $N_\mu$, then $E(\Omega)=M_{\chi_\Omega}$.
\end{enumerate}
For $A,B\in \bh$, we say $A$ is unitarily equivalent to $B$, that is, $A\cong B$ if there exists an unitary operators $U$ such that $U^*AU=B$.
\begin{theorem}\cite[Theorem 10.16]{Conway:Book:Functional analysis}\label{Dsum_Normal}
	If $N$ is a normal operator, then there are mutually singular
	measures $\mu_\infty,\mu_1,\mu_2,\dots$(some of which may be zero) such that
	\[N\cong N^{(\infty)}_{\mu_\infty}\oplus N_{\mu_1}\oplus N^{(2)}_{\mu_2}\oplus N^{(3)}_{\mu_3}\oplus\cdots .\]
\end{theorem}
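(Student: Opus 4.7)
The plan is to combine the spectral theorem with multiplicity theory by first reducing to the $*$-cyclic case. Applying the spectral theorem to $N$ produces a spectral measure $E$ supported on $\sigma(N)$. A standard Zorn's lemma argument then decomposes $\clh$ as an orthogonal direct sum $\bigoplus_\alpha \clh_\alpha$ of closed subspaces jointly reducing $N$ and $N^*$, each admitting a $*$-cyclic vector $e_\alpha$. Via the functional calculus, the restriction $N|_{\clh_\alpha}$ is unitarily equivalent to $N_{\mu_\alpha}$ on $L^2(\mu_\alpha)$, where $\mu_\alpha(\Omega) := \langle E(\Omega) e_\alpha, e_\alpha\rangle$ is the scalar spectral measure of $e_\alpha$. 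Separability of $\clh$ lets me take the index set countable, so $N \cong \bigoplus_{\alpha \in \N} N_{\mu_\alpha}$.

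Next I would record two elementary unitary equivalences for the model operators: $N_\mu \cong N_\nu$ whenever $\mu$ and $\nu$ are mutually absolutely continuous (implemented by $f \mapsto f\sqrt{d\mu/d\nu}$), and $N_\mu \oplus N_\nu \cong N_{\mu+\nu}$ whenever $\mu \perp \nu$ (via the canonical identification of $L^2(\mu)\oplus L^2(\nu)$ with $L^2(\mu+\nu)$ through disjoint supports). Together with the Lebesgue--Radon--Nikodym theorem, these permit free rearrangement of the direct sum.

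Using these, I would first produce the ordered representation: choose a dominating finite measure $\mu^{(1)}$ (for example a suitably weighted sum of the $\mu_\alpha$) and iteratively apply Radon--Nikodym to obtain a descending chain $\mu^{(1)} \gg \mu^{(2)} \gg \mu^{(3)} \gg \cdots$ with $N \cong N_{\mu^{(1)}} \oplus N_{\mu^{(2)}} \oplus N_{\mu^{(3)}} \oplus \cdots$. Finally, partition the spectrum into pairwise disjoint Borel sets $\Omega_1, \Omega_2, \ldots, \Omega_\infty$, where $\Omega_n$ is the set on which $\mu^{(n)}$ is supported but $\mu^{(n+1)}$ vanishes and $\Omega_\infty$ is where every $\mu^{(k)}$ lives. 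Setting $\mu_n := \mu^{(1)}|_{\Omega_n}$ makes the $\mu_n$ mutually singular, and on each $\Omega_n$ the ordered direct sum collapses to exactly $n$ copies of $N_{\mu_n}$, producing the canonical form $N \cong N^{(\infty)}_{\mu_\infty} \oplus N_{\mu_1} \oplus N^{(2)}_{\mu_2} \oplus N^{(3)}_{\mu_3} \oplus \cdots$.

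The principal obstacle is the measure-theoretic bookkeeping in the ordered reduction and the regrouping step: one must maintain the multiplicity count consistently while simultaneously enforcing mutual singularity of the $\mu_n$. Separability of $\clh$ is crucial, as it forces the initial $*$-cyclic decomposition to be countable, so the Radon--Nikodym peeling terminates in a countable sequence cleanly indexed by $\{1,2,3,\ldots,\infty\}$.
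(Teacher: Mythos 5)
The paper does not actually prove this statement---it is quoted as a preliminary directly from Conway's book---so there is no internal proof to compare against; your outline is, in substance, the standard multiplicity-theory argument that Conway himself gives. The reduction to a countable direct sum of $*$-cyclic pieces, the two elementary equivalences ($N_\mu \cong N_\nu$ when $[\mu]=[\nu]$, and $N_\mu \oplus N_\nu \cong N_{\mu+\nu}$ when $\mu \perp \nu$), and the final slicing of an ordered chain along the sets $\Omega_n$ where exactly $n$ of the measures survive are all correct and are exactly the right tools. The one step that is genuinely thinner than a proof is the passage from the unordered sum $\bigoplus_\alpha N_{\mu_\alpha}$ to the ordered representation $\mu^{(1)} \gg \mu^{(2)} \gg \cdots$: ``iteratively apply Radon--Nikodym'' does not by itself produce it, since the $\mu_\alpha$ carry no a priori ordering of their measure classes. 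The standard device is to fix a dominating measure $\mu$, replace each $\mu_\alpha$ by an equivalent $\mu|_{E_\alpha}$ for Borel sets $E_\alpha$, introduce the multiplicity function $m(x)=\sum_\alpha \chi_{E_\alpha}(x)$, and set $\mu^{(k)}=\mu|_{\{m\geq k\}}$; the unitary implementing $\bigoplus_\alpha N_{\mu|_{E_\alpha}} \cong \bigoplus_k N_{\mu^{(k)}}$ then requires a measurable re-partition of the index set over each level set of $m$, not merely Radon--Nikodym derivatives. You correctly flag this bookkeeping as the principal obstacle; with that step carried out, the argument is complete and matches the cited source.
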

If $\clh=\displaystyle\oplus_{n\geq 1} \clh_n$ and $A=\displaystyle\oplus_{n\geq 1} A_n$, where $A_n\in \clb(\clh_n)$ . It is well know that $A$ is bounded inf and only if $\sup_{n}\|A_n\|<\infty$ and in this case $\|A\|=\sup_{n}\|A_n\|$. Now suppose that $B\in \bh$, then $B$ has a operator matrix representation $B=[B_{ij}]$, where $B_{ij}\in \clb(\clh_j,\clh_i)$. Let $T\in \bh$. Then $\{T\}^\prime=\{S\in \bh : ST=TS \}$ is called the commutant of $T$. The following result concerns about the the commutant of a direct sum of operators. 
\begin{proposition}\cite[Proposition 6.1]{Conway:Book:Functional analysis}\label{direct sum commut prop}
	The following are true:
	\begin{enumerate}
		\item If $A=\oplus_{n\geq 1} A_n$ is a bounded operator on $\clh=\oplus_{n\geq 1} \clh_n$ and $B=[B_{ij}]\in \bh$, then $AB=BA$ if and only if $B_{ij}A_j=A_iB_{ij}$ for all $i,j$.
		\item If $B=[B_{ij}]\in \clb(\clh^{(n)})$, then $BA^{(n)}=BA^{(n)}$ if and only if $B_{ij}A=AB_{ij}$ for all $i,j$, where $\clh^{(n)}$ and $A^{(n)}$ are the direct sum of $n$ copies of $\clh$ and $A$,respectively.
	\end{enumerate}
	
\end{proposition}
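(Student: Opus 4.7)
The plan is to unwind the block-matrix representation and compare entries directly; the assertion is essentially a bookkeeping calculation but worth writing out carefully. Let $P_i:\clh\to\clh_i$ denote the orthogonal projection onto the $i$-th summand, so every $x\in\clh$ decomposes as $x=\sum_i x_i$ with $x_i=P_ix\in\clh_i$. By definition of the operator matrix, $B_{ij}=P_iB|_{\clh_j}:\clh_j\to\clh_i$, and hence $(Bx)_i=\sum_j B_{ij}x_j$, the series converging in $\clh_i$ by boundedness of $B$. Since $A=\oplus_{n\geq 1}A_n$ is block-diagonal, $(Ax)_i=A_ix_i$.

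With these formulas I would compute both sides of the commutation relation coordinatewise:
\[
(ABx)_i=A_i(Bx)_i=\sum_j A_iB_{ij}x_j,\qquad (BAx)_i=\sum_j B_{ij}(Ax)_j=\sum_j B_{ij}A_jx_j.
\]
If $AB=BA$, equating the two series for every $x\in\clh$ and specializing to vectors supported on a single summand $\clh_j$ isolates $A_iB_{ij}x_j=B_{ij}A_jx_j$ for all $x_j\in\clh_j$, giving $A_iB_{ij}=B_{ij}A_j$ for every pair $i,j$. Conversely, assuming all these equalities, reassembling the series shows $ABx=BAx$ for every $x$, which settles part (1). Part (2) is the immediate specialization to $\clh_n=\clh$, $A_n=A$ for every $n$: then $A^{(n)}$ is the block-diagonal operator whose entries are all copies of $A$, and part (1) applied to $A^{(n)}$ yields $BA^{(n)}=A^{(n)}B$ iff $AB_{ij}=B_{ij}A$ for every $i,j$.

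I do not anticipate any substantive obstacle. The only point that requires a moment's care is norm-convergence of the componentwise series in the case of an infinite direct sum, which follows from $B\in\bh$ together with orthogonality of the summands (so that $\sum_j\|B_{ij}x_j\|^2\le\|B\|^2\|x\|^2$). Once the identification of $B$ with its block matrix $[B_{ij}]$ is made precise, the rest is a formal entrywise calculation.
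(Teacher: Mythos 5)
Your argument is correct and is exactly the standard entrywise computation behind the cited result (the paper itself gives no proof, only the reference to Conway), so there is nothing to add to the main line: $(ABx)_i=\sum_j A_iB_{ij}x_j$ versus $(BAx)_i=\sum_j B_{ij}A_jx_j$, test on vectors supported in a single $\clh_j$, and specialize to get part (2). One small correction to your convergence remark: the inequality $\sum_j\|B_{ij}x_j\|^2\le\|B\|^2\|x\|^2$ does not by itself give convergence of $\sum_j B_{ij}x_j$, since these vectors all lie in the \emph{same} summand $\clh_i$ and square-summability of norms of non-orthogonal vectors does not imply summability of the series. The convergence is instead immediate from continuity: $\sum_j B_{ij}x_j=\sum_j P_iBx_j=P_iB\bigl(\sum_j x_j\bigr)=P_iBx$, because $P_iB$ is bounded and $\sum_j x_j$ converges to $x$. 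With that repair the proof is complete; note also that the paper's statement of part (2) contains the typo $BA^{(n)}=BA^{(n)}$, which you correctly read as $BA^{(n)}=A^{(n)}B$.
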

It is well known fact that if $\mu$ is a compactly supported measure on $\C$, then
\[ \{N_\mu\}^{\prime}=\{M_\vp :\vp\in L^\infty(\mu)\} .\]
\begin{proposition}\cite[Proposition 3.5]{Gilbreath:Wogen}\label{conju commute positive}
	$J$ is a conjugation commuting with $N_\mu^{(n)}$ if and only if $J$ has the form $J=\left(M_{\vp_{i j}}\right) S_\mu^{(n)}$, where $\left(M_{\vp_{i j}}\right)$ is a unitary operator matrix such that $\vp_{i j}=\vp_{j i} \in L^{\infty}(\mu)$, for all $i,j$.
\end{proposition}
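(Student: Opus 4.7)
The plan is to convert the anti-linear $J$ into a linear operator lying in the commutant of $N_{\mu}^{(n)}$, invoke \cref{direct sum commut prop} to obtain a block decomposition, and then extract the symbol conditions from the two axioms of a conjugation.

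First I would set $T := J S_{\mu}^{(n)}$. The product of two anti-linear operators is linear, and since $S_{\mu}^{(n)}$ is block-diagonal with entries $S_{\mu}$ that (anti-linearly) commute with $N_{\mu}$, the operator $T$ commutes with $N_{\mu}^{(n)} = \bigoplus_{k=1}^{n} N_{\mu}$. Applying \cref{direct sum commut prop}, I obtain an operator-matrix representation $T = [T_{ij}]$ with each block $T_{ij}$ in the commutant $\{N_{\mu}\}'$. The identification $\{N_{\mu}\}' = \{M_{\vp} : \vp \in L^{\infty}(\mu)\}$ recorded just above the statement forces $T_{ij} = M_{\vp_{ij}}$ for some $\vp_{ij} \in L^{\infty}(\mu)$. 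Using $(S_{\mu}^{(n)})^{2} = I$, this yields the representation $J = (M_{\vp_{ij}}) S_{\mu}^{(n)}$.

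It remains to translate the conjugation axioms into algebraic conditions on the symbols. Writing $\Phi(z) = [\vp_{ij}(z)]$ and using the basic rule $S_{\mu} M_{\vp} = M_{\bar{\vp}} S_{\mu}$ (a direct consequence of the anti-linearity of $S_{\mu}$), I would compute $J^{2}$ and the anti-linear adjoint $J^{\#}$ explicitly. The square $J^{2}$ collapses to an operator matrix of multiplication operators with symbol matrix $\Phi \bar{\Phi}$, while $J^{\#} J$ has symbol matrix $\Phi^{*} \Phi$ (using $(TK)^{\#} = K T^{*}$ for linear $T$ and conjugation $K$). Thus the involution property $J^{2} = I$ becomes $\Phi(z) \overline{\Phi(z)} = I$ a.e., while the isometric property $J^{\#} J = I$ becomes $\Phi(z)^{*} \Phi(z) = I$ a.e., which is exactly the statement that $(M_{\vp_{ij}})$ is a unitary operator matrix. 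Combining the two pointwise identities forces $\Phi^{*} = \bar{\Phi}$, i.e., $\Phi = \Phi^{T}$, which is the symmetry $\vp_{ij} = \vp_{ji}$. The converse direction is obtained by running the same computations in reverse.

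The main obstacle is this last step of disentangling the two axioms: the involution property alone yields only $\Phi \bar{\Phi} = I$, which is strictly weaker than unitarity of $(M_{\vp_{ij}})$, and only the conjunction of the involution and isometric axioms yields both the pointwise unitarity of $\Phi(z)$ and its symmetry. All the computations ultimately reduce to careful bookkeeping of how $S_{\mu}$ conjugates multiplication operators when shuffled through operator-matrix products.
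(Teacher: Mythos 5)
Your argument is correct and is essentially the route the paper itself takes in Section~4 (following Gilbreath--Wogen): pass to the linear operator $JS_{\mu}^{(n)}$, identify it with a matrix $\left(M_{\vp_{ij}}\right)$ of multiplication operators via \cref{direct sum commut prop} and the description of $\{N_\mu\}'$, and then read off pointwise unitarity and symmetry of the symbol matrix from the two conjugation axioms $J^{\#}J=I$ and $J^{2}=I$ (the paper records this last step only as a remark, so your explicit disentangling of $\Phi^{*}\Phi=I$ and $\Phi\overline{\Phi}=I$ is a welcome addition). The one point to keep in mind is that your step ``$S_{\mu}$ commutes with $N_{\mu}$'' uses that $\mu$ is supported on the real line, so that $N_{\mu}=N_{\mu}^{*}$ --- which is exactly the setting in which the proposition is applied (conjugations commuting with a positive operator); for a genuinely planar measure one only has $S_{\mu}N_{\mu}=N_{\mu}^{*}S_{\mu}$ and the reduction would instead have to go through \cref{anti-commut-normal}.
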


\section{Factorization and range inclusion theorem for anti-linear operators}
This section is devoted to application of Douglas theorem. Here we prove the range inclusion and factorization theorem for anti-linear operators and also, we prove the polar decomposition of anti-linear operators.
\begin{theorem}
	Let $S, T\in \clb_{a}(\clh)$, and $C$ be a conjugation on $\clh$. Then the fallowing are equivalent:
	\begin{enumerate} 
		
		\item\label{AD1}  $\text{ran}(T) \subseteq \text{ran}(S)$.
		
		\item\label{AD2}  $T T^{\#} \leq S S^{\#}$.
		
		\item\label{AD3} $T=S R$, where $R=CDC$ for some $D \in \clb(\clh)$.
	\end{enumerate}
	Moreover, for such an operator $R$, we have $\ker R=\ker T$ and $\overline{\text{ran}}(R)=(\ker S)^\perp. $
\end{theorem}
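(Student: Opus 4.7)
The plan is to reduce the statement to the classical Douglas theorem (already recalled in the preliminaries) by using the conjugation $C$ to linearize the two anti-linear operators. I set $A := TC$ and $B := SC$ in $\clb(\clh)$; these are genuine linear operators, since the composition of two anti-linear maps is linear. A direct computation using the defining identity $\langle Xx, y\rangle = \langle X^{\#}y, x\rangle$ of the anti-linear adjoint together with the conjugation property $\langle Cu, Cv\rangle = \langle v, u\rangle$ yields
\[
\langle TCx, y\rangle = \langle T^{\#}y, Cx\rangle = \overline{\langle Cx, T^{\#}y\rangle} = \langle x, CT^{\#}y\rangle,
\]
so $A^* = CT^{\#}$ and analogously $B^* = CS^{\#}$. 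Consequently $AA^* = TT^{\#}$ and $BB^* = SS^{\#}$, and since $C$ is a bijection, $\text{ran}(A) = \text{ran}(T)$ and $\text{ran}(B) = \text{ran}(S)$.

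Under this dictionary each of the three conditions in the statement becomes the corresponding clause of Douglas's theorem applied to $(A, B)$. Condition (1) is literally $\text{ran}(A) \subseteq \text{ran}(B)$; condition (2) is $AA^* \leq BB^*$ (with the constant $k^2$ from Douglas absorbed); and the factorization $T = SR$ with $R = CDC$ is obtained from a linear factorization $A = BD$ by right-multiplying by $C$ and inserting $C^2 = I$, and conversely, given such a $D$ one recovers $R := CDC \in \clb(\clh)$ with $T = SR$. The three-way equivalence is then immediate from Douglas.

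For the moreover part, I take the distinguished $D$ produced by Douglas, so that $\ker D = \ker A$ and $\overline{\text{ran}}(D) \subseteq \overline{\text{ran}}(B^*)$, and set $R := CDC$. Using that $C$ is a bijection with $C^2 = I$, I compute
\[
\ker R = C(\ker D) = C(\ker(TC)) = C^2(\ker T) = \ker T.
\]
For the range, a short verification shows that $C$ sends a closed subspace to the orthogonal complement of its image, i.e.\ $(CN)^\perp = C(N^\perp)$; combined with $\ker B = C(\ker S)$ and $\overline{\text{ran}}(B^*) = (\ker B)^\perp$, this yields $C(\overline{\text{ran}}(B^*)) = (\ker S)^\perp$, and hence $\overline{\text{ran}}(R) = C(\overline{\text{ran}}(D)) \subseteq (\ker S)^\perp$. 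The reverse inclusion, needed for the claimed equality, should come from the minimality/uniqueness clause of Douglas applied to $D$.

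The main obstacle I anticipate is careful bookkeeping of anti-linearity when passing to adjoints and orthogonal complements: the formula $A^* = CT^{\#}$ rests on two separate uses of conjugate-linearity (once for $T^{\#}$, once for the identity $\langle Cu, Cv\rangle = \langle v, u\rangle$), and an analogous twist governs $(CN)^\perp = C(N^\perp)$. Once the dictionary $T \leftrightarrow TC$ is firmly in place, the theorem reduces cleanly to Douglas applied to the linearized pair $(A, B)$.
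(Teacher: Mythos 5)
Your proposal follows essentially the same route as the paper: linearize via $A=TC$, $B=SC$, check $A^*=CT^{\#}$, $AA^*=TT^{\#}$, $\operatorname{ran}(A)=\operatorname{ran}(T)$, apply Douglas to $(A,B)$, and translate back through $R=CDC$. The three-way equivalence and the identity $\ker R=\ker T$ are handled exactly as in the paper and are correct.

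One point you left open deserves comment: you derive $\overline{\text{ran}}(R)=C\,\overline{\text{ran}}(D)\subseteq(\ker S)^\perp$ and hope the reverse inclusion follows from the uniqueness clause of Douglas. It does not, and in fact the equality asserted in the statement fails in general: clause (c) of the Douglas theorem as quoted gives only $\text{ran}(D)\subseteq\overline{\text{ran}}(B^{*})=(\ker B)^\perp$, and taking $T=0$ (so $D=0$, $R=0$) with $S$ having $(\ker S)^\perp\neq\{0\}$ shows the inclusion can be strict. So the honest conclusion of your argument, namely $\overline{\text{ran}}(R)\subseteq(\ker S)^\perp$, is the correct one; the paper's own proof silently upgrades Douglas's inclusion to the equality $\overline{\text{ran}}(D)=(\ker B)^\perp$ without justification. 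Apart from flagging this (and the usual caveat that the equivalence of the positivity condition with the other two really requires a constant, $TT^{\#}\leq k^{2}SS^{\#}$, exactly as in Douglas's item (iii)), your write-up is sound.
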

\begin{proof}
	(\ref{AD1}) $\implies$(\ref{AD3})
	Let $C$ be a conjugation on $\clh$. Set $A=T C$ and $B=S C$. Then $A$ and $B$ are linear operators. Since $C \clh=\clh$, we have
	
	\begin{equation*}
		\text{ran}(A)=\text{ran}(T C)=\text{ran}(T) \subseteq \text{ran}(S)=\text{ran}(S C)=\text{ran}(B).
	\end{equation*}
	
	Then by Douglas theorem, there is a $ D \in \clb(\clh)$ such that
	
	$$
	\begin{aligned}
		& A=B D \\
		\Rightarrow T C & =S C D \\
		\Rightarrow T & =S C D C =S R, \text { where } R=C D C .
	\end{aligned}
	$$
	
	Note that $R\in \clb(\clh)$.\\
	(\ref{AD3})$\implies$ (\ref{AD1})
	Using the fact that $C\clh=\clh$, we get $T\clh=S R \clh \subseteq S \clh $ and this implies $ R(T) \subseteq R(S)$.\\
	(\ref{AD2}) $\implies$ (\ref{AD3}) Observe that $A A^{*}=T C C T^{\#}=T T^{\#}$ and $B B^{*}=S S^{\#}$. Therefore, $T T^{\#} \leq S S^{\#}$ implies $A A^{*} \leq B B^{*} $. By Douglas theorem, $ A=BD$ and hence $T=S R$.\\
	(\ref{AD3}) $\implies$ (\ref{AD2}) If $T=SR$, then
	\begin{equation*}
		\begin{split}
			S S^{\#}-T T^{\#}  &=S S^{\#}-S C D C C D^{*} C S^{\#} \\
			&=S S^{\#}-S C D D^{*} C S^{\#} 
			=S C\left(I-D D^{*}\right) C S^{\#} 
			=S C\left(I-D D^{*}\right)(S C)^{*}	.
		\end{split}
	\end{equation*}

	Thus $T T^{\#} \leq S S^{\# } $.

	Since $\ker D=\ker A$ and $\overline{\text{ran}}(D)=(\ker B)^{\perp}$, we have

	\begin{equation*}
		\ker D=\ker (TC)=C\ker T\quad\text{and}\quad \overline{\text{ran}}(D)=(\ker (SC))^\perp=(C\ker S)^\perp=C(\ker S)^\perp.
	\end{equation*}
	
	Thus $$\ker R= \ker (CDC)=\ker (DC)=C\ker D=C\ker(TC)=\ker T, $$ and	
	$$\overline{\text{ran}}(R)=\overline{\text{ran}}(CDC)=(\ker(CD^*C) )^\perp=(\ker(D^*C) )^\perp=C\overline{\text{ran}}(D)=(\ker S)^\perp. $$
	This completes the proof.
\end{proof}
\begin{theorem}
	Let $S,T\in\clb_a(\clh)$ such that $S^{\#} S=T^{\#} T$. Then there exists a partial isometry $D\in \clb(\clh)$  with initial space $\overline{\text{ran}}(T)$ and final space $\overline{\text{ran}}(S)$ such that $S=DT$.
\end{theorem}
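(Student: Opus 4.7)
My plan is to reduce the statement to the standard linear fact ``$A^{*}A=B^{*}B$ implies $A=VB$ for a partial isometry $V$ with prescribed initial/final spaces,'' using exactly the conjugation trick the author employed in the preceding theorem. Fix any conjugation $C$ on $\clh$ and set $A:=SC$ and $B:=TC$. Since $S,T,C$ are all anti-linear, both $A$ and $B$ are bounded \emph{linear} operators on $\clh$, which puts the problem back in the realm of linear Hilbert-space machinery.

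The key preliminary computation is to identify the linear adjoints of $A$ and $B$. A short calculation using the defining relation of the anti-linear adjoint together with $\langle Cu,Cv\rangle=\langle v,u\rangle$ gives, for all $x,y\in\clh$,
\begin{equation*}
\langle Ax,y\rangle=\langle S(Cx),y\rangle=\langle S^{\#}y,Cx\rangle=\langle x,CS^{\#}y\rangle,
\end{equation*}
so $A^{*}=CS^{\#}$ and, analogously, $B^{*}=CT^{\#}$. Consequently
\begin{equation*}
A^{*}A=CS^{\#}SC=CT^{\#}TC=B^{*}B,
\end{equation*}
and the anti-linear hypothesis $S^{\#}S=T^{\#}T$ has been repackaged as a linear identity.

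At this point I invoke the classical linear result: $A^{*}A=B^{*}B$ forces $\|Ax\|=\|Bx\|$ for every $x$, so the assignment $Bx\mapsto Ax$ is well-defined and isometric on $\text{ran}(B)$ and extends, together with the zero map on $\overline{\text{ran}}(B)^{\perp}$, to a partial isometry $V\in\clb(\clh)$ with initial space $\overline{\text{ran}}(B)$, final space $\overline{\text{ran}}(A)$, and $A=VB$. Substituting back and multiplying on the right by $C$ turns $SC=V\,TC$ into $S=VT$ via $C^{2}=I$, so I take $D:=V$.

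It only remains to identify the initial and final spaces of $D$ in the stated form. Since $C\clh=\clh$, one has $\overline{\text{ran}}(TC)=\overline{\text{ran}}(T)$ and $\overline{\text{ran}}(SC)=\overline{\text{ran}}(S)$, which matches the conclusion exactly. I do not expect any real obstacle: the only step requiring care is the computation of $(SC)^{*}=CS^{\#}$, and once that identity is in place the argument is entirely formal, mirroring the anti-linearization performed in the previous theorem of the paper.
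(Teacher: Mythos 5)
Your proof is correct and follows essentially the same route as the paper: both arguments linearize the problem by composing the anti-linear operators with a conjugation $C$ and then invoke the Douglas-type factorization (``$A^{*}A=B^{*}B$ implies $A=VB$ for a partial isometry $V$ with initial space $\overline{\text{ran}}(B)$ and final space $\overline{\text{ran}}(A)$''). The only cosmetic difference is that you set $A=SC$, $B=TC$ and use $A^{*}A=B^{*}B$, whereas the paper sets $A=CS^{\#}$, $B=CT^{\#}$ (the adjoints of your operators) and uses $AA^{*}=BB^{*}$.
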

\begin{proof}
Let $C$ be a conjugation. Then	$ S^{\#} S=T^{\#} T$ implies	$ C S^{\#} S C=C T^{\#} T C$. Now set $A=C S^{*}$ and $B=C T^{*}$. Then $A A^{*}=B B^{*}$. By Douglas Theorem,
	
	$$A=B D^{*},$$ where $D$ is a linear partial isometry with initial Space $\overline{\text{ran}}\left(B^{*}\right)$ and final space $\overline{\text{ran}}\left(A^{*}\right)$.
	
	This implies
	
	\begin{equation*}
		\begin{split}
				C S^{*} & =C T^{\#} D^{*} \\
			\quad S^{\#} & =T^{\#} D^{*} \\
			S & =D T.
		\end{split}
	\end{equation*}
	
	Since $\overline{\text{ran}}\left(B^{*}\right)=\ker(B)^{\perp}=\ker\left(C T^{*}\right)^{\perp}=\ker\left(T^{*}\right)^{\perp}=\overline{\text{ran}}\left(T\right)$ and $\overline{\text{ran}}\left(A^{*}\right)=\overline{\text{ran}}(S).$ The operator $D$ is linear partial isometry with initial space $\overline{\text{ran}}(T)$ and final space $\overline{\text{ran}}(S)$.
\end{proof}
Next we give the polar decomposition of anti-linear operators which is also available in \cite{RSV:C-Polar} but here we establish this as a consequence of Douglas theorem. 

\begin{theorem}
	Let $A\in \clb_a(\clh)$ and $C$ be a conjugation. Then there exists an unique anti-linear partial isometry $J$ such that $A=J|A|$ such that $\ker J=\ker A$.
\end{theorem}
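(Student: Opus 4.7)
The plan is to reduce this to the factorization theorem for anti-linear operators just established, namely that $S^{\#}S=T^{\#}T$ for $S,T\in\clb_a(\clh)$ yields a (linear) partial isometry $D\in\bh$ with $S=DT$. The work consists of choosing $S$ and $T$ so that the resulting $D$ can be converted into the desired anti-linear $J$.

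First I would check that $|A|:=(A^{\#}A)^{1/2}$ is a well-defined positive linear operator on $\clh$. Indeed, $A^{\#}A$ is linear as a composition of two anti-linear maps, and the identity $\langle A^{\#}Ax,x\rangle=\|Ax\|^{2}$ (immediate from the definition of $A^{\#}$) shows it is positive and self-adjoint. In particular $\ker|A|=\ker A$ and $\overline{\text{ran}(|A|)}=(\ker A)^{\perp}$.

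Next, set $S=A$ and $T=C|A|$. Since $C$ is anti-linear and $|A|$ is linear self-adjoint, $T\in\clb_a(\clh)$ with $T^{\#}=|A|\,C$, giving
\[
T^{\#}T=|A|C\cdot C|A|=|A|^{2}=A^{\#}A=S^{\#}S.
\]
The preceding theorem then furnishes a linear partial isometry $D\in\bh$ with initial space $\overline{\text{ran}(T)}=C\,\overline{\text{ran}(|A|)}$ and final space $\overline{\text{ran}(A)}$ such that $A=DT=(DC)|A|$. Setting $J:=DC$, the map $J$ is anti-linear, $A=J|A|$, and a short computation yields $\ker D=C(\ker A)$, so $\ker J=\{x:Cx\in\ker D\}=\ker A$; for $x\in(\ker A)^{\perp}=\overline{\text{ran}(|A|)}$, the vector $Cx$ lies in the initial space of $D$, whence $\|Jx\|=\|DCx\|=\|Cx\|=\|x\|$, and $J$ is an anti-linear partial isometry with the stated kernel. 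For uniqueness, any anti-linear partial isometry $J'$ with $A=J'|A|$ and $\ker J'=\ker A$ must agree with $J$ on $\text{ran}(|A|)$ via $J'(|A|x)=Ax=J(|A|x)$, extend by continuity to $(\ker A)^{\perp}$, and vanish on $\ker A$; hence $J'=J$.

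The principal difficulty is bookkeeping of where $C$ sits: the earlier theorem produces a \emph{linear} partial isometry, but we need an anti-linear one, and $|A|$ itself is linear and therefore cannot be used as $T$ directly. Inserting $C$ on the left, so that $T=C|A|$, is exactly what allows $C$ to be absorbed into $D$ from the left to form the anti-linear partial isometry $J=DC$.
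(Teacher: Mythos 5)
Your proof is correct and takes essentially the same route as the paper: both reduce the anti-linear polar decomposition to Douglas's theorem by composing with the conjugation $C$ to pass between linear and anti-linear operators (the paper applies Douglas directly to $\tilde{A}=(CA)^*$ and $\tilde{B}=|A|$, obtaining $J=CD^*$, while you invoke the intermediate factorization theorem with $S=A$, $T=C|A|$ to obtain $J=DC$). A minor bonus of your write-up is that you actually verify the uniqueness of $J$ and its partial-isometry property on $(\ker A)^\perp$, which the paper's statement asserts but its proof leaves implicit.
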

\begin{proof}
	Since $A$ is an anti-linear operator and $C$ is a conjugation, the operator $CA$ is bounded linear. Then note that $|CA||CA|=(CA)^*(CA)$ and $|A|=|CA|$. Set $\tilde{A}=(CA)^*$ and $\tilde{B}=|A|$ and then by applying Douglas theorem, we get
	\begin{equation*}
		\begin{split}
			\tilde{A}&=\tilde{B}D\quad\text{which implies}\\
			A&=CD^*|A|\\
			A&=J|A|,
		\end{split}
	\end{equation*}
	where $	J=CD^*$	for some $D\in \clb(\clh)$ with $\ker D=\ker \tilde{A}$ and $\overline{\text{ran}}D=(\ker \tilde{B})^\perp$. Therefore,
	\begin{equation*}
		\ker J=\ker (CD^*)=\ker D^*=(\text{ran}D)^\perp=\ker\tilde{B}=\ker |A|=\ker A.\end{equation*}
	Also, $J$ is anti-linear and isometry on  $\overline{\text{ran}}(|A|)=(\ker J)^\perp.$
\end{proof}

Below we give a proof of polar decomposition of $C$-normal operators (cf. Theorem \ref{cpolar}) by using Douglas theorem.
\begin{theorem}
	Let $T\in \clb(\clh)$ and $C$ be a conjugation. Then $T$ is $C$-normal operator if and only if there exists an partial anti-unitary operator $J$ such that $T=CJ|T|$ with $J|T|=|T|J$.
\end{theorem}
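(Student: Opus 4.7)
The approach is to reduce both implications to the anti-linear polar decomposition theorem established just above, applied to the anti-linear operator $CT$. The crucial observation is that $(CT)^{\#} = T^*C$, so $(CT)^{\#}(CT) = T^*C\cdot CT = T^*T$, giving $|CT| = |T|$.

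For the forward direction, assuming $T$ is $C$-normal, I would apply the anti-linear polar decomposition to $CT$ to produce a partial anti-linear isometry $J$ satisfying $CT = J|T|$ with $\ker J = \ker(CT) = \ker T$. Rearranging gives $T = CJ|T|$. To upgrade $J$ from a partial anti-isometry to a partial anti-unitary supported on $\overline{\text{ran}}|T|$, I would check that the initial space $(\ker J)^{\perp} = \overline{\text{ran}}|T|$ coincides with the final space $\overline{\text{ran}}(CT) = C\overline{\text{ran}}(T) = C\overline{\text{ran}}|T^*|$; this is exactly the content of the $C$-normal condition $C|T^*|C = |T|$.

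The remaining requirement $J|T| = |T|J$ is the main technical point. To verify it, I would identify $J$ explicitly via the standard linear polar decomposition $T = U|T| = |T^*|U$: the identity $CT = CU|T|$ combined with uniqueness of the anti-linear polar decomposition forces $J = CU$ on $\overline{\text{ran}}|T|$ (both vanish on $\ker|T|$). Then
\begin{equation*}
J|T| = CU|T| = CT \qquad \text{and} \qquad |T|J = |T|CU = C|T^*|U = CT,
\end{equation*}
where the middle step uses $|T|C = C|T^*|$, another form of the $C$-normal hypothesis. Hence $J|T| = |T|J$.

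For the converse, from $T = CJ|T|$ I observe that $CJ$ is linear (composition of two anti-linear maps), so $(CJ)^* = J^{\#}C$ and $T^* = |T|J^{\#}C$. A direct computation then gives
\begin{equation*}
TT^* = CJ|T|^2 J^{\#}C = C|T|^2 JJ^{\#}C = C|T|^2 C,
\end{equation*}
where I use $J|T|^2 = |T|^2 J$ (iterating the commutation) and the fact that $JJ^{\#}$ is the projection onto $\overline{\text{ran}}|T|$, which acts as the identity on $\text{ran}\,|T|^2$. Taking square roots and using $(CAC)^{1/2} = CA^{1/2}C$ for positive $A$ yields $|T^*| = C|T|C$, so $T$ is $C$-normal. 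The hard part of the whole argument is the commutation $J|T| = |T|J$ in the forward direction: the anti-linear polar decomposition alone produces $J$, but the commutation is not automatic and demands the $C$-normal condition in the precise intertwining form $|T|C = C|T^*|$.
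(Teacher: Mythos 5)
Your argument is correct, and it follows the same overall Douglas-theorem strategy as the paper, but the two proofs diverge at the decisive step, namely the commutation $J|T|=|T|J$. The paper applies Douglas directly to the pair $A=CT^*C$, $B=|T^*|$ (using $AA^*=BB^*$) to produce $J=DC$, and then extracts the commutation a posteriori from the operator identity $|T^*|^2=TT^*=\bigl(CJ|T|J^{\#}C\bigr)^2$, taking positive square roots to get $|T^*|=CJ|T|J^{\#}C$ and hence $J|T|=|T|J$. You instead feed the anti-linear operator $CT$ into the anti-linear polar decomposition already established (noting $|CT|=|T|$), and then pin down $J$ explicitly as $CU$, where $U$ is the linear polar partial isometry of $T$; the commutation then drops out in one line from $T=U|T|=|T^*|U$ together with $|T|C=C|T^*|$. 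Your identification $J=CU$ is legitimate: $CU$ is an anti-linear partial isometry with $\ker(CU)=\ker U=\ker T=\ker(CT)$, so the uniqueness clause of the anti-linear polar decomposition applies (or one may simply define $J:=CU$ and verify everything directly). What your route buys is transparency — the commutation is seen to be exactly the intertwining $|T|C=C|T^*|$ rather than the output of a square-root manipulation — at the cost of importing the uniqueness statement and the standard facts about $U$. Your converse is also more detailed than the paper's (which dismisses it as trivial); the only point to watch there is that the step $|T|^2JJ^{\#}=|T|^2$ uses that the final space of $J$ contains $\overline{\text{ran}}|T|$, which is guaranteed precisely because $J$ is a partial \emph{anti-unitary} supported on $\overline{\text{ran}}|T|$ rather than an arbitrary anti-linear partial isometry.
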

\begin{proof}
	Let $T\in \clb(\clh)$ be $C$-normal operator that is, $CT^*TC=TT^*$ equivalently, $C|T|C=|T^*|$. Then by setting $A=CT^*C$ and $B=|T^*|$, we have $AA^*=BB^*$. Now apply Douglas theorem, we get $A=BD^*$ for some $D:\overline{\text{ran}}(B^*)\rightarrow\overline{\text{ran}}(A^*)$ such that $DB^*x=A^*x$, and $Dx=0$ for all $x\in \overline{\text{ran}}(B^*)^\perp=\ker B$. Thus we have
	\begin{equation*}
		\begin{split}
			CT^*C&=|T^*|D^*\\
			T&=CD|T^*|C\\
			T&=CDC|T|.
		\end{split}
	\end{equation*}
	By defininig $J=DC$, we get $T=CJ|T|$. Note that
	\begin{equation*}
		\begin{split}
			\ker J=\ker DC=C\ker D&=C\ker B=	C\ker |T^*|\\&=\ker (|T^*|C)=\ker (C|T|)=\ker |T|=\overline{\text{ran}}(|T|)^\perp,
		\end{split}
	\end{equation*}
	and
	\begin{equation*}
		\begin{split}
			\overline{\text{ran}}(J)=\overline{\text{ran}}(DC)=\overline{\text{ran}}(D)&=\overline{\text{ran}}(CTC)=C\overline{\text{ran}}(T)=C(\ker T^*)^\perp\\&=(\ker |T^*|C)^\perp=(\ker C|T|)^\perp=\overline{\text{ran}}(|T|)
		\end{split}
	\end{equation*}
	Since $\overline{\text{ran}}(B^*)=\overline{\text{ran}}(|T^*|)=C\overline{\text{ran}}(|T|)$, and $\overline{\text{ran}}(A^*)=\overline{\text{ran}}(|T|)$,
	it is evident that $J=DC$ is anti-linear unitary operator supported on $\overline{\text{ran}}(|T|)$.
	
	Also,
	\begin{equation*}
		|T^*|^2=TT^*=CJ|T||T|J^\#C=CJ|T|J^\#CCJ|T|J^\#C
	\end{equation*}
This implies $|T^*|=CJ|T|J^\#C$ and consequently, $J|T|=|T|J$.

The converse follows trivially.
\end{proof}

\section{Factorization of conjugate normal operators}
Let us begin this section with a few insightful observations.
These findings are significant and fit well within the context of conjugate normal operators, even if we have not used them to achieve the primary objective of this section.
\begin{lemma}\label{compression C-normal}
	Let $T\in \clb(\clh)$. If $\clm$ is a reducing subspace of $T$ and $T|_\clm=0$, then $T\in \gs$ if and only if $T|_{\clm^\perp}\in \gs$.
\end{lemma}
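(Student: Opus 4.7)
The plan is to exploit the block-diagonal structure of $T$ forced by the hypotheses. Since $\clm$ reduces $T$ and $T|_{\clm}=0$, we can write $T = 0 \oplus B$ on $\clh = \clm \oplus \clm^\perp$, where $B := T|_{\clm^\perp}$. Consequently $|T| = 0 \oplus |B|$ and $|T^*| = 0 \oplus |B^*|$, both of which have kernel exactly $\clm$.

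For the $(\Leftarrow)$ direction, assume $B$ is $C'$-normal for some conjugation $C'$ on $\clm^\perp$. Pick any conjugation $C_{\clm}$ on $\clm$ (for instance, the one coming from coordinates in an orthonormal basis of $\clm$), and set $C := C_{\clm} \oplus C'$. Then $C$ is a conjugation on $\clh$, and a direct computation yields $C|T|C = 0 \oplus C'|B|C' = 0 \oplus |B^*| = |T^*|$, so $T$ is $C$-normal.

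For the $(\Rightarrow)$ direction, suppose $T$ is $C$-normal for some conjugation $C$ on $\clh$. The key observation is that the identity $C|T|C = |T^*|$, combined with $\ker|T| = \ker|T^*| = \clm$, forces $C\clm = \clm$: indeed, $\clm = \ker|T^*| = \ker(C|T|C) = C\ker|T| = C\clm$. Since $C$ is an anti-linear isometric involution preserving $\clm$, it also preserves $\clm^\perp$, so $C' := C|_{\clm^\perp}$ is a genuine conjugation on $\clm^\perp$. Restricting $C|T|C = |T^*|$ to $\clm^\perp$ then yields $C'|B|C' = |B^*|$, exhibiting $B$ as $C'$-normal.

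The only subtle point is the invariance $C\clm = \clm$ in the forward direction, but this drops out immediately from the kernel identity above; everything else is a routine restriction argument. As an alternative, the entire statement is a one-line consequence of Proposition \ref{cnormaldirectsum} applied with $A = 0$, once one notes that the zero operator on $\clm$ is trivially diagonal with respect to any orthonormal basis.
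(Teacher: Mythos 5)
Your closing remark is, in fact, the paper's entire proof: the lemma is deduced by applying \cref{cnormaldirectsum} with $A=0$ (the zero operator on $\clm$ being trivially diagonal), so that part of your proposal is correct and coincides with the paper. Your backward direction is also sound: $C=C_{\clm}\oplus C'$ is a conjugation on $\clh$ and $C|T|C=0\oplus C'|B|C'=0\oplus|B^*|=|T^*|$.

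The self-contained forward direction, however, has a genuine gap. You assert that $|T|=0\oplus|B|$ and $|T^*|=0\oplus|B^*|$ both have kernel exactly $\clm$, but in fact $\ker|T|=\ker T=\clm\oplus\ker B$ and $\ker|T^*|=\clm\oplus\ker B^*$, so these kernels equal $\clm$ only when $B$ and $B^*$ are injective. Consequently the chain $\clm=\ker|T^*|=C\ker|T|=C\clm$ does not go through, and the conclusion $C\clm=\clm$ is genuinely false in general: take $T=0$ on $\C^2$ with $\clm=\C e_1$ and the swap conjugation $C(z_1,z_2)=(\overline{z_2},\overline{z_1})$; then $T$ is $C$-normal while $C\clm=\C e_2\neq\clm$. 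Since your argument proceeds by restricting $C$ to $\clm^\perp$, it collapses precisely when $C$ fails to preserve $\clm^\perp$. The point is that the conclusion only requires \emph{some} conjugation on $\clm^\perp$, and manufacturing one from a $C$ that mixes $\clm$ with $\clm^\perp$ is the nontrivial content of \cref{cnormaldirectsum}. Your restriction argument is valid in the special case where $B$ and $B^*$ are both injective; for the general case you should either invoke the cited proposition (as the paper does) or supply an argument that builds a conjugation on $\clm^\perp$ without assuming $C\clm=\clm$.
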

\begin{proof}
The proof follows from \cref{cnormaldirectsum}.
\end{proof}
\begin{corollary}
	Let $T\in \clb(\clh)$ be normal. Then $T\in \gs$  if and only if $T|_{(\ker T)^\perp}\in \gs$.
\end{corollary}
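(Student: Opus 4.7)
The plan is to recognize this as a direct application of the preceding \cref{compression C-normal}, once we verify that the hypotheses of that lemma are met by $\clm = \ker T$ when $T$ is normal.

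First I would observe that for a normal operator $T$ one has the well-known identity $\ker T = \ker T^{*}$ (since $\|Tx\| = \|T^{*}x\|$ for every $x \in \clh$). From this, $\ker T$ is invariant under both $T$ and $T^{*}$, so it is a reducing subspace of $T$. Moreover, by the very definition of kernel, $T|_{\ker T} = 0$.

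Now I would apply \cref{compression C-normal} with the choice $\clm = \ker T$ and $\clm^{\perp} = (\ker T)^{\perp}$. The hypotheses of that lemma — namely, that $\clm$ reduces $T$ and that $T|_{\clm} = 0$ — are exactly what was just verified. The conclusion $T \in \gs \iff T|_{\clm^{\perp}} \in \gs$ is precisely the statement we want.

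There is really no obstacle here: the whole content of the corollary is the observation that the kernel of a normal operator automatically satisfies the structural hypothesis of the previous lemma. The proof should therefore be no more than two or three lines, essentially amounting to invoking \cref{compression C-normal} after noting $\ker T$ reduces $T$.
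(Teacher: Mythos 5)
Your proof is correct and follows the same route as the paper: note that normality gives $\ker T = \ker T^{*}$, hence $\ker T$ reduces $T$ with $T|_{\ker T}=0$, and then invoke \cref{compression C-normal}. The paper's own argument is identical, merely citing Conway for the reducing-subspace fact.
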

\begin{proof}
	Since $T$ is normal, $\ker T$ is a reducing subspace for $T$ (cf. \cite[Proposition 5.6]{Conway:Book:Functional analysis}). Hence by \cref{compression C-normal}, the conclusion follows.
\end{proof}
Below we are giving a different proof of the above result.
\begin{lemma}
Let $C$ be a conjugation on a separable complex Hilbert space $\clh$. Suppose that $T\in \clb (\clh)$ is a $C$-normal operator. If $T$ is normal, then $T|_{(\ker T)^\perp}$ is $C_2$-normal, where $C_2:(\ker T)^\perp\rightarrow(\ker T)^\perp$ is a conjugation.	
\end{lemma}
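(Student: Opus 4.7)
The plan is to exploit the fact that normality of $T$ forces $|T|=|T^*|$, so the $C$-normality relation $C|T|C=|T^*|$ collapses to $C|T|C=|T|$, i.e., $C|T|=|T|C$ in the anti-linear sense. This commutation will let me show that $\ker T$ is $C$-invariant, and the desired $C_{2}$ will simply be $C|_{(\ker T)^{\perp}}$.

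First I would record the general identities I need: $\ker T=\ker|T|$ for every bounded operator, and since $T$ is normal, $\ker T$ reduces $T$, so $|T|$ also leaves both $\ker T$ and $(\ker T)^{\perp}$ invariant. From $C|T|C=|T|$ and $C^{2}=I$ I get $C|T|=|T|C$, hence for $x\in\ker|T|$, $|T|Cx=C|T|x=0$, giving $C(\ker T)\subseteq\ker T$; applying $C$ again and using $C^{2}=I$ yields $C(\ker T)=\ker T$. Because $C$ is an anti-linear isometry, $\langle Cx,Cy\rangle=\overline{\langle x,y\rangle}$ forces $C((\ker T)^{\perp})=(\ker T)^{\perp}$ as well.

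Next I would define $C_{2}:=C|_{(\ker T)^{\perp}}$. The three conjugation axioms — anti-linearity, isometry, and $C_{2}^{2}=I$ — are inherited directly from $C$, so $C_{2}$ is a conjugation on $(\ker T)^{\perp}$. Set $S:=T|_{(\ker T)^{\perp}}$. Since $\ker T$ reduces the normal operator $T$, $S$ is normal on $(\ker T)^{\perp}$, and the functional calculus restricts: $|S|=|T|\big|_{(\ker T)^{\perp}}$ and $|S^{*}|=|T^{*}|\big|_{(\ker T)^{\perp}}$. Because $S$ is normal, $|S|=|S^{*}|$.

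Finally, to verify $C_{2}|S|C_{2}=|S^{*}|$, I would restrict the identity $C|T|C=|T|$ to $(\ker T)^{\perp}$; this restriction is legitimate because, as noted, both $C$ and $|T|$ preserve this subspace. One then reads off $C_{2}|S|C_{2}=|S|=|S^{*}|$, which is the $C_{2}$-normality of $S$. The main obstacle is purely bookkeeping: confirming that each restricted operator really acts within $(\ker T)^{\perp}$ so that the equation $C|T|C=|T|$ can be transported to the subspace without loss. Once the invariance of $\ker T$ under $C$ is established — which the commutation $C|T|=|T|C$ delivers immediately — the rest is a matter of reading off identities from the normal case.
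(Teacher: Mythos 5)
Your proof is correct and follows essentially the same route as the paper's: both arguments show that $C$ leaves $\ker T$ (and hence $(\ker T)^\perp$) invariant, so that $C$ is block-diagonal with respect to $\clh=\ker T\oplus(\ker T)^\perp$ and restricts to a conjugation $C_2$ on $(\ker T)^\perp$, after which the $C$-normality identity is simply restricted to that subspace. The only cosmetic difference is that you obtain $C(\ker T)=\ker T$ from the commutation $C|T|=|T|C$ and verify $C_2|S|C_2=|S^*|$, whereas the paper uses $C\ker T=\ker T^*=\ker T$ and verifies $C_2A^*AC_2=AA^*$ -- equivalent formulations of the same facts.
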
 
\begin{proof}
	Since $T$ is normal, we have $\ker T=\ker T^*$. Write $\clh=\ker T\oplus(\ker T)^\perp$. Then $T$ can be written as $T= \begin{bmatrix}
	0&0\\
	0&A
	\end{bmatrix}$, where $A:(\ker T)^\perp\rightarrow(\ker T)^\perp$.
	
	Now $T$ is $C$-normal which implies $C\ker T=\ker T^*=\ker T$. Therefore, $C$ can be written as $C=\begin{bmatrix}
	C_1&0\\
	0&C_2
	\end{bmatrix}$, where $C_1:\ker T\rightarrow\ker T$ and $C_2:(\ker T)^\perp\rightarrow(\ker T)^\perp$. Note that $C_2$ is also a conjugation on $(\ker T)^\perp$ and $CT^*TC=TT^*$ implies that $C_2A^*AC_2=AA^*$ which implies $A:=T|_{(\ker T)^\perp}$ is $C_2$-normal.
\end{proof}

\begin{proposition}
Let $C$ be a conjugation, and $T\in \bh$ be $C$-normal. Suppose that $S_2=T^*T+TT^*$, then $S_2\cong N\oplus N^*\oplus A$, where $N$ is normal operator and $A$ is self-adjoint operator.
\end{proposition}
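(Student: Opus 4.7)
The plan is to reduce the statement to a direct application of the multiplicity theory for self-adjoint operators. First, I would observe that $S_2 = T^*T + TT^*$ is a sum of two positive operators, hence positive and in particular self-adjoint; \cref{c-symmetric from c-normal}(2) additionally gives $CS_2C = S_2$, but this finer symmetry will not be needed below.

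Next, I would apply \cref{Dsum_Normal} to $S_2$ to obtain
\begin{equation*}
S_2 \;\cong\; N_{\mu_\infty}^{(\infty)} \oplus N_{\mu_1} \oplus N_{\mu_2}^{(2)} \oplus N_{\mu_3}^{(3)} \oplus \cdots,
\end{equation*}
for some mutually singular regular Borel measures $\mu_n$ supported inside $\sigma(S_2)\subset [0,\infty)$. Since each $\mu_n$ lives on $\mathbb{R}$, every building block $N_{\mu_n}$ is self-adjoint.

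The main step is to regroup the summands into pairs. For every finite $n \geq 2$ I would split $N_{\mu_n}^{(n)}$ as $N_{\mu_n}^{(\lfloor n/2\rfloor)} \oplus N_{\mu_n}^{(\lfloor n/2\rfloor)}$, leaving one leftover copy of $N_{\mu_n}$ when $n$ is odd; for $n = \infty$ the cardinal identity $\infty = \infty + \infty$ permits the split $N_{\mu_\infty}^{(\infty)} \cong N_{\mu_\infty}^{(\infty)} \oplus N_{\mu_\infty}^{(\infty)}$. Setting
\begin{equation*}
N \;=\; N_{\mu_\infty}^{(\infty)} \oplus \bigoplus_{n \geq 2} N_{\mu_n}^{(\lfloor n/2\rfloor)}, \qquad A \;=\; N_{\mu_1} \oplus \bigoplus_{\substack{n\geq 3\\ n\text{ odd}}} N_{\mu_n},
\end{equation*}
and reordering the orthogonal summands by a unitary, I would arrive at $S_2 \cong N \oplus N \oplus A$. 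Each summand appearing in the definition of $N$ is self-adjoint, so $N$ itself is self-adjoint and $N = N^*$; likewise $A$ is self-adjoint. This furnishes the desired decomposition $S_2 \cong N \oplus N^* \oplus A$ with $N$ normal and $A$ self-adjoint.

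The only delicate point I anticipate is the bookkeeping for the $n = \infty$ block and for any $\mu_n$ that happens to vanish (those summands are simply dropped). I do not expect a serious obstacle; notably the $C$-normality of $T$ enters the argument only through guaranteeing that $S_2$ has the self-adjoint form to which \cref{Dsum_Normal} applies.
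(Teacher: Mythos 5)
Your argument is correct, but it takes a genuinely different --- and in a sense much weaker --- route than the paper's. The paper observes that $C$-normality of $T$ forces $CS_2C=S_2$ (see \cref{c-symmetric from c-normal}) and then invokes \cite[Theorem 5]{Wang:Xie:Yan:Zhu}, which describes normal operators commuting with a conjugation as exactly those unitarily equivalent to $N\oplus N^*\oplus A$; in that result the splitting is adapted to $C$ (the block $N\oplus N^*$ sits on a pair of reducing subspaces interchanged by $C$, and $A$ on a part where $C$ acts diagonally), so the decomposition carries structural information about the pair $(S_2,C)$. You instead use only that $S_2=T^*T+TT^*$ is positive, hence self-adjoint, and run \cref{Dsum_Normal}; since each spectral block $N_{\mu_n}$ is multiplication by a real variable and hence self-adjoint, your $N$ satisfies $N=N^*$ and the conclusion follows by regrouping (the bookkeeping with $\lfloor n/2\rfloor$ and $\infty=\infty+\infty$ is fine, as is dropping vanishing $\mu_n$). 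This does prove the proposition as literally stated --- indeed it shows the $C$-normality hypothesis is superfluous for that statement, since one could even take $N=0$ and $A=S_2$ --- but it achieves this by making the $N\oplus N^*$ block degenerate, so it does not recover the conjugation-adapted decomposition that the paper's citation supplies and that is presumably the intended content. One small inaccuracy in your closing remark: $C$-normality is not what guarantees that $S_2$ is self-adjoint; as you yourself note at the outset, $T^*T+TT^*$ is positive for every $T\in\bh$, so the hypothesis in fact plays no role anywhere in your argument.
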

\begin{proof}
Since $T$ is $C$-normal, the operator $S_2$ satisfies $CS_2C=S_2$. Hence, by using \cite[Theorem 5]{Wang:Xie:Yan:Zhu}, the conclusion follows.
\end{proof}

Also, if $T$ is $C$-normal for some conjugation $C$, then $S_1=T^*T-TT^*$ satisfies $CS_1C=-S_1$, or equivalently, $CS_1+S_1C=0$. Therefore, this observation allow us to ask the following:
\begin{question}\label{CTC=-T}
	\emph{Characterize all normal operators $T$ satisfying $CTC=-T$ for some conjugation $C$.}
\end{question}

In \cite[Theorem 5]{Wang:Xie:Yan:Zhu}, authors gave a concrete description of normal operators commuting with conjugations (that is, $CTC=T$). In the forthcoming result, we characterize normal operators $T$ that satisfies $CTC=-T$ for some conjugation $C$.

 The results in \cite[Theorem 2.2]{Li: Zhu} and \cite[Lemma 6]{Wang:Xie:Yan:Zhu} are quite helpful in this occasion of solving the \cref{CTC=-T}. In order to do so, let us begin with some useful notations. Denote $\Sigma_1=\{\alpha\in \C: \Re(\alpha)>0   \}$ and $\Sigma_2=\{\alpha\in \C: \Re(\alpha)=0   \}$, where $\Re(\alpha)$ is the real part of a complex number $\alpha$. Given a subset $\Gamma$ of the complex plane $\C$, we denote $\Gamma^*=\{\bar{\alpha}:\alpha\in \C  \}$ and $E_T$ to signify the spectral measure associated with the operator $T$.
\begin{lemma}\label{key-lemma}
	Let $T\in \bh$ be normal and $C$ be a conjugation such that $CTC=-T$, or equivalently, $CT+TC=0$. Then $C\text{ran}E_{T}(\Delta)=\text{ran}E_{T}(-\Delta^*)$, where $\Delta=\Sigma_1\cap\sigma(T)$.
\end{lemma}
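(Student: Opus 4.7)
The plan is to lift the relation $CTC=-T$ to a spectral identity of the form $CE_{T}(\Delta)C=E_{T}(-\Delta^{*})$ by tracking how $C$ conjugates the Borel functional calculus of $T$; taking ranges and using $C\clh=\clh$ then yields the conclusion immediately. In this way the problem reduces to a clean functional-calculus computation, and the geometric set $\Sigma_{1}\cap\sigma(T)$ enters only at the very last step.

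Step one, the polynomial level. Iterating $CTC=-T$ together with $C^{2}=I$ gives $CT^{n}C=(-T)^{n}$. Taking anti-linear adjoints (using $\langle CTCx,y\rangle=\langle x,CT^{*}Cy\rangle$, which identifies $(CTC)^{\#}=CT^{*}C$) produces $CT^{*}C=-T^{*}$, hence $CT^{*n}C=(-T^{*})^{n}$. Since $C$ is anti-linear, for any polynomial $p(z,\bar z)=\sum a_{ij}z^{i}\bar z^{j}$ we obtain
\begin{equation*}
C\,p(T,T^{*})\,C=\sum \overline{a_{ij}}(-1)^{i+j}T^{i}T^{*j}=\hat p(T,T^{*}),
\end{equation*}
where $\hat p(z,\bar z):=\overline{p(-\bar z,-z)}$; a direct check shows $p\mapsto \hat p$ is a conjugate-linear involutive $*$-isomorphism of the algebra of polynomials in $z,\bar z$.

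Step two, extension to bounded Borel functions. I would promote the identity $Cf(T)C=\hat f(T)$ from polynomials in $z,\bar z$ to every bounded Borel function $f$ on $\sigma(T)$. Both $f\mapsto Cf(T)C$ and $f\mapsto \hat f(T)$ are conjugate-linear, multiplicative, and norm-continuous; they agree on polynomials in $z,\bar z$, hence on $C(\sigma(T))$ by Stone--Weierstrass, and finally on all bounded Borel $f$ by the standard bounded-convergence continuity of the normal functional calculus, since $C$ is isometric and therefore preserves strong-operator limits. Specializing to $f=\chi_{\Delta}$ for a Borel set $\Delta\subseteq\sigma(T)$, the chain $\chi_{\Delta}(-\bar z)=1\Leftrightarrow -\bar z\in\Delta\Leftrightarrow z\in\{-\bar\alpha:\alpha\in\Delta\}=-\Delta^{*}$ gives $\hat\chi_{\Delta}=\chi_{-\Delta^{*}}$, so
\begin{equation*}
C\,E_{T}(\Delta)\,C=E_{T}(-\Delta^{*}).
\end{equation*}

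Step three, taking ranges. Applying both sides to $\clh$ and using $C\clh=\clh$ yields $C\,\text{ran}\,E_{T}(\Delta)=\text{ran}\,E_{T}(-\Delta^{*})$; choosing $\Delta=\Sigma_{1}\cap\sigma(T)$ completes the proof. The main (only) technical point is the bounded-Borel extension of $Cf(T)C=\hat f(T)$ in step two, but this is routine once the conjugate-linearity and multiplicativity of both sides are noted; everything else is direct algebraic manipulation. Note that this argument is clean precisely because $C$ anti-commutes with $T$: it is the pair $(CTC,CT^{*}C)=(-T,-T^{*})$, not $(T^{*},T)$ as in the $C$-symmetric case, so no conjugation-symmetry is imposed on $T$ beyond the spectral symmetry $\sigma(T)=-\overline{\sigma(T)}$ that the identity forces as a by-product.
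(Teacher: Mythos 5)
Your proposal is correct and follows essentially the same route as the paper: verify $Cg(T)C=g^{*}(T)$ on polynomials in $z,\bar z$, extend to $C(\sigma(T))$ by Stone--Weierstrass, pass to characteristic functions by a limiting argument to get $CE_{T}(\Delta)C=E_{T}(-\Delta^{*})$, and take ranges. The only (cosmetic) difference is in the last limiting step, where you invoke bounded pointwise convergence and SOT-continuity of the Borel functional calculus while the paper approximates $\chi_{\Delta}$ in the weak$^{*}$ topology of $L^{\infty}(\mu)$ and uses WOT-continuity; both are standard and equally valid.
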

\begin{proof}
	Since $CTC=-T$, we have $\sigma(T)=-\sigma(T)^*$. Let $f\in C(\sigma(T))$, the space of all continuous functions defined on $\sigma(T)$, define $f^*(z)=\overline{f(-\bar{z})}$ for all $z\in \sigma(T)$. Then the function $f^*\in C(\sigma(T))$. For $g(z)=\alpha z^n\bar{z}^m$, it follows that
	\[ Cg(T)C=C(\alpha T^n(T^*)^m)C=\bar{\alpha}(-T)^n(-T^*)^m=g^*(T).\]
	Since all bivariate polynomials in $z$ and $\bar{z}$ are uniformly dense in $C(\sigma(T))$, it follows that
	\begin{equation}
		Cf(T)C=f^*(T), \quad\text{for all} \,f\in C(\sigma(T)).
	\end{equation} 
	
	Let $\chi_{\Delta}$ be the characteristic function of $\Delta$ belongs to $L^\infty(\mu)$, where $\mu$ is a scalar-valued spectral measure for $T$. Thus there exits a sequence $\{f_n\}$ in $C(\sigma(T))$ such that $f_n\rightarrow\chi_{\Delta}$ and $f^*_n\rightarrow\chi_{-\Delta^*}$ in the weak$^*$-topology of $L^\infty(\mu)$. Thus by functional calculus for normal operators (cf. \cite[Theorem 8.10]{Conway:Book:Functional analysis}), we get $f_n(T)\rightarrow\chi_{\Delta}(T)$ and $f^*_n(T)\rightarrow\chi_{-\Delta^*}(T)$ in the weak operator topology. Therefore, we obtain $C\chi_{\Delta}(T)C=\chi_{-\Delta^*}(T)$, that is, $CE_T(\Delta)C=E_T(-\Delta^*)$, and hence the desired conclusion follows.
\end{proof}
\begin{theorem}
Let $T\in \bh$ be a normal operator. Then there exists a conjugation $C$ such that $CTC=-T$ if and only if $T\cong T_1\oplus (-T_1^*) \oplus (iT_3)$, where $T_1$ is normal and $T_3$ is self-adjoint. 
\end{theorem}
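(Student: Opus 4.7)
The plan is to split the spectrum of $T$ into three parts using the hypothesis $CTC=-T$, which in particular forces $\sigma(T)=-\sigma(T)^*$. Set $H_1:=\text{ran}\,E_T(\Sigma_1)$, $H_2:=\text{ran}\,E_T(-\Sigma_1^*)$, and $H_3:=\text{ran}\,E_T(\Sigma_2)$; these are mutually orthogonal and sum to $\clh$. The proof of \cref{key-lemma} in fact establishes $CE_T(\Delta)C=E_T(-\Delta^*)$ for arbitrary Borel $\Delta$, so $C$ interchanges $H_1$ and $H_2$ while leaving $H_3$ invariant. Accordingly $T=A_1\oplus A_2\oplus A_3$, with each $A_i$ normal on $H_i$.

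I would then identify the three summands. Since $\sigma(A_3)\subseteq i\mathbb{R}$ and $A_3$ is normal, functional calculus gives $A_3^*=-A_3$, so $A_3=iT_3$ with $T_3:=-iA_3$ self-adjoint. For the other two, write $C_{12}:=C|_{H_1}\colon H_1\to H_2$; this is an anti-linear isometric bijection with $C_{12}^{-1}=C|_{H_2}$, and unpacking $CTC=-T$ on $H_1$ gives $A_2 C_{12}=-C_{12}A_1$, i.e.\ $A_2=-C_{12}A_1 C_{12}^{-1}$. To convert this anti-linear intertwining into a genuine unitary equivalence I would choose a conjugation $D$ on $H_1$ with $DA_1 D=A_1^*$; such a $D$ exists because every normal operator is complex symmetric — by \cref{Dsum_Normal} write $A_1$ as a direct sum of operators of the form $N_\mu^{(n)}$ and take $D$ to be pointwise complex conjugation on each $L^2(\mu)$ factor. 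Then $U:=C_{12}D\colon H_1\to H_2$ is the composition of two anti-linear maps, hence linear and unitary, and a short rearrangement yields
\[
A_2=-C_{12}A_1 C_{12}^{-1}=-(UD)A_1(DU^{-1})=-U(DA_1 D)U^{-1}=-UA_1^*U^{-1},
\]
so $A_2\cong -A_1^*$. Setting $T_1:=A_1$ gives $T\cong T_1\oplus(-T_1^*)\oplus(iT_3)$.

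For the converse, given $T\cong T_1\oplus(-T_1^*)\oplus(iT_3)$ with $T_1$ normal and $T_3$ self-adjoint, I would assemble $C$ blockwise. Choose a conjugation $D$ on the first summand with $DT_1 D=T_1^*$, and a conjugation $C_3$ on the third summand commuting with $T_3$ (which exists for any self-adjoint operator via its multiplication-operator model, taking $C_3$ to be pointwise complex conjugation), and set $C(x_1\oplus x_2\oplus x_3):=Dx_2\oplus Dx_1\oplus C_3 x_3$. Verifying that $C$ is an involutive anti-linear isometry and that $CTC=-T$ is a direct computation using anti-linearity. The main obstacle is the unitary equivalence $A_2\cong -A_1^*$ in the forward direction: the only natural intertwiner between $A_1$ and $-A_2$ is the anti-linear map $C_{12}$, and straightening it into a unitary requires producing a conjugation under which $A_1$ is symmetric — precisely the complex-symmetric realization of normal operators furnished by \cref{Dsum_Normal}.
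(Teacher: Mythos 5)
Your argument is correct and follows essentially the same route as the paper: split $\clh$ via the spectral projections onto $\Sigma_1\cap\sigma(T)$, its reflection, and the imaginary axis, use \cref{key-lemma} to see that $C$ swaps the first two pieces and preserves the third, and then compose the anti-linear intertwiner with a conjugation realizing the complex symmetry of $A_1$ to obtain the unitary equivalence $A_2\cong -A_1^*$. Your write-up is in fact slightly more complete than the paper's, since you also spell out the identification $A_3=iT_3$ and the converse construction of $C$, both of which the paper leaves implicit.
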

\begin{proof}
Denote $\Delta_1=\Sigma_1\cap \sigma(T)$ and $\Delta_2=\Sigma_2\cap \sigma(T)$. Then $\sigma(T)=\Delta_1\cup (-\Delta_1^*)\cup \Delta_2$. Denote $\clh_1=\text{ran} E_T(\Delta_1)$, $\clh_2=\text{ran}E_T(-\Delta_1^*)$, and $\clh_3=\text{ran}E_{T}(\Delta_2)$. Then $\clh=\clh_1\oplus\clh_2\oplus\clh_3$. Since $T$ is normal, the subspaces $\clh_1,\clh_2,\clh_3$ reduces $T$. Then $T=T_1\oplus T_2\oplus T_3$, where $T_i =P_{\clh_i}T|_{\clh_i}$. By \cref{key-lemma}, we have $C\clh_1=\clh_2$ and $C\clh_2=\clh_1$. Therefore, we have 
\[C=\begin{bmatrix}
0&D^{-1}&0\\
D&0&0\\
0&0&C_3
\end{bmatrix},\]
where $D:\clh_1\rightarrow\clh_2$ is an anti-linear, invertible, isometry, and $C_3:\clh_3\rightarrow\clh_3$ is a conjugation. Since $T$ satisfies $CTC=-T$, we have $T_2=-DT_1D^{-1}$. Again, $T_1$ is normal, so there exists a conjugation $C_1$ on $\clh_1$ such that $C_1T_1C_1=T_1^*$, and hence we have $T_2=-DC_1T_1^*C_1D^{-1}$. Observe that $DC_1$ is an unitary operator. Thus $T_2\cong -T_1^*$ (unitarily equivalent).
\end{proof}
The following result follows very easily from the above theorem and \cref{c-symmetric from c-normal}.
\begin{corollary}
Let $T\in \bh$ be a $C$-normal operator. Then  $S_1=T^*T-TT^*\cong N\oplus (-N^*) \oplus (iS)$, where $N$ is normal operator and $S$ is self-adjoint operator.
\end{corollary}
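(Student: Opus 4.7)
The plan is to apply the preceding theorem (the one classifying normal operators $T$ satisfying $CTC=-T$) directly to $S_1$. The only thing that needs checking is that $S_1$ fits the hypotheses of that theorem.

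First I would observe that $S_1 = T^*T - TT^*$ is self-adjoint, since $(T^*T)^* = T^*T$ and $(TT^*)^* = TT^*$, hence $S_1^* = S_1$. In particular $S_1$ is a normal operator. Next, by \cref{c-symmetric from c-normal}(1), $S_1$ is $C$-skew-symmetric, which by \cref{complex symmetric} means $CS_1^*C = -S_1$. Combining this with $S_1^* = S_1$ yields the key relation
\begin{equation*}
CS_1C = -S_1.
\end{equation*}

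At this point the hypotheses of the previous theorem hold with $T$ replaced by $S_1$, so I would invoke it to obtain the unitary equivalence
\begin{equation*}
S_1 \cong T_1 \oplus (-T_1^*) \oplus (iT_3),
\end{equation*}
where $T_1$ is normal and $T_3$ is self-adjoint. Renaming $T_1 = N$ and $T_3 = S$ gives the desired decomposition.

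There is no real obstacle here; the corollary is essentially a one-line application. The only subtlety worth flagging is the bookkeeping between \textbf{$C$-skew-symmetric} ($CS_1^*C = -S_1$) and the \textbf{$CTC = -T$} form used in the preceding theorem, which reconcile precisely because $S_1$ is self-adjoint. Once that identification is made, the conclusion is immediate.
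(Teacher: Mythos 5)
Your proposal is correct and is essentially the paper's own argument: the paper derives the corollary by combining the $C$-skew-symmetry of $S_1$ (from \cref{c-symmetric from c-normal}) with the preceding theorem, exactly as you do. Your explicit note that $S_1^*=S_1$ reconciles $CS_1^*C=-S_1$ with the $CS_1C=-S_1$ hypothesis is a worthwhile clarification, but it is the same route.
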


	\begin{theorem}\label{commute:spectral measure and conju}
		Let $C$ be a conjugation, and $T\in \clb(\clh)$ be $C$-normal. Then we have the following:\begin{enumerate}
			\item\label{normal:C-normal:spectral measure} If $T$ is normal, then the conjugation $C$ commutes with spectral measure of $T^*T$. That is, if $E$ is the projection-valued spectral measure of $T^*T$, then 
			\begin{equation*}
			C E(G)=E(G)C
			\end{equation*}
			for every Borel subset $G$ of $[0,\infty)$. In particular, $C$ reduces the range of $E(G)$.
			\item\label{spectral measure: CT} The anti-linear operator $CT$ commutes with projection-valued spectral measure of $T^*T$.
		\end{enumerate}
	\end{theorem}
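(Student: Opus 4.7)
My plan is to reduce both parts to the weak-$*$ approximation principle already used in the proof of \cref{key-lemma}, in the guise of the following abstract fact: if $N\in\bh$ is self-adjoint and $S$ is a bounded anti-linear operator satisfying $SN=NS$, then $Sf(N)=f(N)S$ for every real-valued bounded Borel $f$; in particular $S$ commutes with every spectral projection $E_N(G)$. The verification runs just as in \cref{key-lemma}: iterating $SN=NS$ gives $Sp(N)=p(N)S$ for real polynomials $p$ (the anti-linearity of $S$ does no damage so long as the coefficients are real), Stone--Weierstrass extends this to real continuous functions on $\sigma(N)$, and a weak-$*$ approximation of $\chi_G$ by real continuous $f_n$ combined with the functional calculus $f_n(N)\to\chi_G(N)$ in the weak operator topology closes the argument. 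Passing $S$ through the WOT limit is legitimate because the identity $\langle Sx,y\rangle=\overline{\langle x,S^{\#}y\rangle}$ shows that every bounded anti-linear operator is weakly continuous.

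With this principle in hand, part (\ref{normal:C-normal:spectral measure}) is immediate: the hypotheses $T^*T=TT^*$ and $CT^*TC=TT^*$ combine to give $C(T^*T)C=T^*T$, so $C$ commutes with $N:=T^*T$. Applying the principle with $S=C$ yields $CE(G)=E(G)C$ for every Borel $G\subseteq[0,\infty)$. The parenthetical conclusion that $C$ reduces $\mathrm{ran}\,E(G)$ then follows because $C$ also commutes with $I-E(G)$, so both $\mathrm{ran}\,E(G)$ and its orthogonal complement are $C$-invariant.

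For part (\ref{spectral measure: CT}) I take $S=CT$ and verify directly that $S$ commutes with $N=T^*T$. Rewriting $C$-normality in the equivalent form $T^*T=CTT^*C$ (obtained from $CT^*TC=TT^*$ by sandwiching with $C$ and using $C^2=I$), I compute
\[ (T^*T)(CT)=(CTT^*C)(CT)=CTT^*T=(CT)(T^*T). \]
The abstract principle then produces $(CT)E(G)=E(G)(CT)$, as desired. The only delicate point in the whole argument is keeping track of the anti-linearity in the WOT-approximation step, but this is precisely the subtlety already navigated in the proof of \cref{key-lemma}; the fact that the approximants $f_n$ and limit $\chi_G$ are real-valued is what allows the commutation to pass through without flipping to $\bar{f}(N)S$.
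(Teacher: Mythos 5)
Your argument is correct and follows essentially the same route as the paper: both parts reduce to the algebraic identities $CT^*T=T^*TC$ (using normality plus $C$-normality) and $(T^*T)(CT)=(CT)(T^*T)$ (using $T^*T=CTT^*C$), after which commutation with the spectral projections is obtained by approximating $\chi_G$ through the continuous functional calculus and passing the anti-linear operator through the limit. The only difference is cosmetic: you package the limiting step as an abstract lemma via weak-$*$ approximation (as in the paper's Lemma~\ref{key-lemma}), which has the mild advantage of treating arbitrary Borel $G$ directly, whereas the paper argues with monotone approximation $f_n\nearrow\chi_G$ for open $G$.
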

 
\begin{proof}
{\bf Proof of (\ref{normal:C-normal:spectral measure})}	Since $T$ is $C$-normal as well as normal, we have 
	\begin{equation*}
	CT^*T=TT^*C=T^*TC
	\end{equation*}
	which shows that $C$ commutes with $T^*T$. Then $C$ commutes with $f(T^*T)$ for every $f\in C(\sigma(T^*T))$. Let $G$ be a nonempty open subset of $\sigma(T^*T)$. Let $\{f_n\}\subseteq C(\sigma(T^*T))$ be such that $0\leq f_n\leq \chi_G$ and $f_n\nearrow \chi_G$. Then
	\begin{equation}
	\begin{split}
	\overline{\langle CE(G)g,h\rangle}&=\langle E(G)g,Ch\rangle\\
	&=E_{g,Ch}(G)\\
	&=\lim \int f_n(\lambda)dE_{g,Ch}(\lambda)\\
	&=\lim\langle f_n(T^*T)g,Ch\rangle\\
	&=\lim\langle h,Cf_n(T^*T)g\rangle\\
		&=\lim\langle h,f_n(T^*T)Cg\rangle,
	\end{split}
	\end{equation}
	and it follows that
	\begin{equation}
	\begin{split}
	\langle CE(G)g,h\rangle &=\lim\langle f_n(T^*T)Cg,h\rangle\\
	&=E_{Cg,h}(G)=\langle E(G)Cg,h\rangle.
	\end{split}
	\end{equation}
	This shows that $C$ commutes with spectral measure of $T^*T$.
	
	{\bf Proof of (\ref{spectral measure: CT}):} Since $T$ is $C$-normal, then we have
	\begin{equation}
	CT(T^*T)=(CTT^*)T=(T^*TC)T=T^*T(CT),
	\end{equation}
	which shows that $CT$ commutes with $T^*T$. Therefore, following the same steps as in (\ref{normal:C-normal:spectral measure}), we get the desired conclusion.
\end{proof}	
\begin{remark}
In a similar manner, we can show that	the anti-linear operator $TC$ commutes with the spectral measure of $TT^*$ as a consequence of the fact that $(TC)TT^*=T(CTT^*)=T(T^*TC)=TT^*(TC)$.
\end{remark}
	Before we proceed further, let us mention the following remarks which are the strong motivation behind our main results. 
\begin{remark}
	Let $(X, \mu)$ be a measure space. Let $L^2(X,\mu)$ be a space of complex valued functions with conjugation $S_\mu$ given by
	 \begin{equation}\label{conju on L^2}
		S_\mu f(x)=\overline{f(x)}.
	\end{equation}
	 Let $\psi\in L^\infty$ and $M_\psi$ be the multiplication operator on $L^2(X,\mu)$ defined by
	$M_\psi f=\psi f$. Then $M_\psi $ is $S_\mu$-symmetric, and hence $S_\mu$-normal.
	It is well known that from the spectral theory that any normal operator $N\in \clb(\clh)$ is unitary equivalent to the multiplication operator $M_\psi$ that is,
	$M_\psi=VNV^*$, where $V\in \clb(\clh,L^2(X,\mu))$ is a unitary. Let $\tilde{C}$ be a conjugation in $\clh$ such that $(V\tilde{C}V^*)f(x)=S_\mu f(x)$. Then $N$ is $\tilde{C}$-normal.	More precisely, if $N$ is a normal operator, then $N$ is $\tilde{C}$-normal, where the conjugation $\tilde{C}$ on $\clh$ is related to the conjugation $S_\mu$ on $L^2(X,\mu)$ by the relation \begin{equation}\label{second relation}
	\tilde{C}=V^*S_\mu V.
	\end{equation}	
\end{remark}	

\begin{remark}
	If $T\in\bh$ is normal and $C$-normal, then have seen that $CT^*T=T^*TC$. Since $T^*T$ is complex symmetric with respect to some conjugation, say $K$. Then by using \cref{anti-commut-normal}, $C=BK$ for some $B\in \bh$, where $B$ is $K$-symmetric unitary operator satisfying $BT^*T=T^*TB$.
\end{remark}
\begin{remark}
		If $T$ is $C$-normal, then we have observed that the anti-linear operator $CT$ commutes with the linear operator $T^*T$. Since $T^*T$ is complex symmetric with some conjugation, say $K$, by using \cref{anti-commut-normal}, we can say that $CT=DK$ for some $D\in \bh$ such that $D$ commute with $T^*T$. This implies that $T=CDK$ for some conjugation $K$. To be more precise, if $T\in \bh$ is $C$-normal, then it can be written as $T=CDK$ for some $D\in \bh$ and conjugation $K$. 
\end{remark}
This lead us to prove the following two results:

\begin{proposition}\label{Commute conju and T^*T}
Let $C$ be a conjugation on $\clh$, and suppose $T$ is $C$-normal. If $T$ is normal then 
\begin{equation}
C=U^*M_\vp V\tilde{C} V^*U,
\end{equation}
where $U,V:\clh\rightarrow L^2(\mu)$ are unitaries, $M_\vp$ is the multiplication operator on $L^2(\mu)$ and $\vp\in L^\infty(\mu)$.
\end{proposition}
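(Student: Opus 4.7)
The plan is to diagonalize $T$ via the spectral theorem, use the resulting conjugation $\tilde{C}$ as a reference, and then exploit the factorization $C=B\tilde{C}$ highlighted in the remark preceding the statement; the multiplier $\vp$ will emerge as the spectral representative of the unitary $B$.

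First I would apply the spectral theorem: since $T$ is normal, choose a regular Borel measure $\mu$ on $\C$, a unitary $V:\clh\to L^{2}(\mu)$, and $\psi\in L^{\infty}(\mu)$ with $VTV^{*}=M_{\psi}$. Setting $\tilde{C}:=V^{*}S_{\mu}V$ yields $V\tilde{C}V^{*}=S_{\mu}$, and since $M_{\psi}\in\cls_{S_{\mu}}(L^{2}(\mu))$, the operator $T$ is $\tilde{C}$-symmetric; in particular $T^{*}T$ is $\tilde{C}$-symmetric. By \cref{commute:spectral measure and conju} we also already know that $CT^{*}T=T^{*}TC$, so $C$ is an antilinear operator commuting with the $\tilde{C}$-symmetric normal operator $T^{*}T$.

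Second, I would invoke \cref{anti-commut-normal} with $K=\tilde{C}$ and $N=T^{*}T$ to produce $B\in\bh$ with $BT^{*}T=T^{*}TB$ and $C=B\tilde{C}$; explicitly $B=C\tilde{C}$, which is unitary (product of two antilinear isometric involutions) and $\tilde{C}$-symmetric. In the principal case where $T^{*}T$ has a cyclic vector, the commutant $\{T^{*}T\}'$ is the maximal abelian algebra $\{f(T^{*}T):f\in L^{\infty}\}$, so $B=f(T^{*}T)$ for some unimodular $f$. Transporting through $V$ gives $VBV^{*}=M_{\vp}$ with $\vp:=f(|\psi|^{2})\in L^{\infty}(\mu)$ and $|\vp|=1$ a.e.

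Taking $U:=V$, a short calculation then gives
\[
UCU^{*}=VBV^{*}\cdot V\tilde{C}V^{*}=M_{\vp}S_{\mu}=M_{\vp}\,V\tilde{C}V^{*},
\]
which rearranges to $C=U^{*}M_{\vp}V\tilde{C}V^{*}U$, as required.

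The main obstacle is the general multiplicity case: when $T^{*}T$ has no cyclic vector, $\{T^{*}T\}'$ is a matrix algebra over a function algebra rather than a scalar function algebra, and $B$ need not be a single multiplier, so the identification $VBV^{*}=M_{\vp}$ fails outright. To recover the conclusion one would combine the multiplicity form of the spectral theorem (\cref{Dsum_Normal}) with the commutant description (\cref{direct sum commut prop}) and the Gilbreath--Wogen-type identification \cref{conju commute positive} to absorb the matricial part into a suitable choice of $U\neq V$, so that a single scalar multiplier $\vp\in L^{\infty}(\mu)$ still does the job on one fixed $L^{2}(\mu)$.
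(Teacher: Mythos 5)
Your proposal is correct in substance and follows essentially the same route as the paper: both arguments reduce to the observation that $C$ anti-linearly commutes with $T^*T$ (via $CT^*T=TT^*C=T^*TC$), compose with a reference conjugation to obtain a \emph{linear} element of $\{T^*T\}'$, and identify that element as a multiplication operator; you package the first step through \cref{anti-commut-normal} and diagonalize $T$ (so that $U=V$), whereas the paper recomputes the commutation at the level of spectral measures and diagonalizes $T^*T$ directly. The multiplicity issue you flag as ``the main obstacle'' is genuine, but it is present to exactly the same extent in the paper's own proof, which writes $T^*T=U^*M_xU$ on a scalar $L^2(\mu)$ and then uses $\{M_x\}'=\{M_\vp:\vp\in L^\infty(\mu)\}$ --- a representation and commutant description that presuppose $T^*T$ is $*$-cyclic; so your proof is not less complete than the paper's, and the repair you sketch via \cref{Dsum_Normal}, \cref{conju commute positive}, and the commutant of direct sums is the right one for the general case.
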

\begin{proof}
	Since the linear operator $T^*T$ is positive, by spectral theorem for normal operator, there is a unitary operator $U:\clh\rightarrow L^2(\mu)$ such that $T^*T=U^*M_xU$, where $M_x$ is the multiplication operator induced by the real variable $x$. Therefore, $E_{T^*T}=U^*E_{M_x}U$, where $E_{T^*T}$ and $E_{M_x}$are the projection-valued spectral measures corresponding to $T^*T$ and $M_x$, respectively. Since $T$ is $C$-normal and normal, by (\ref{normal:C-normal:spectral measure}) of \cref{commute:spectral measure and conju}, we have
	\begin{equation}\label{antilinear commute spec measure}
	\begin{split}
	CU^*E_{M_x}U&=U^*E_{M_x}UC,\,\text{or}\\
	UCU^*E_{M_x} &=E_{M_x}UCU^*.
	\end{split}
	\end{equation}
	Observe that $S_\mu M_x=M_xS_\mu$, as well as $S_\mu E_{M_x}=E_{M_x}S_\mu$. Therefore, by \cref{antilinear commute spec measure}, we have
	\begin{equation*}
	UCU^*S_\mu E_{M_x} =E_{M_x}UCU^*S_\mu,
	\end{equation*}
	and this implies the linear operator $UCU^*S_\mu$ belongs to the commutant of $M_x$. 
	
Hence there exists an essentially bounded measurable function $\vp$ such that $UCU^*S_\mu=M_\vp$, where $M_\vp$ is the multiplication operator on $L^2(\mu)$ induced by $\vp$ with $|\vp|=1 $ $\mu$-a.e. It follows that 
	\begin{equation}\label{first relation}
	C=U^*M_\vp S_\mu U.
	\end{equation}
	Hence by using \cref{second relation}, we obtain the desired conclusion.
	
\end{proof}

\begin{proposition}\label{C-normal structure}
Let $C$ be a conjugation on $\clh$. Then $T$ is $C$-normal if and only if $T=CU^*M_\vp S_\mu U$ for some unitary $U\in \clb(\clh,L^2(\mu))$ and $\vp\in L^\infty(\mu)$.
\end{proposition}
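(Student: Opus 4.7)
The plan is to mimic the argument of \cref{Commute conju and T^*T}, this time applied to the anti-linear operator $CT$ (which by \cref{commute:spectral measure and conju}(\ref{spectral measure: CT}) commutes with the spectral measure of $T^*T$) in place of the conjugation $C$.

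For the forward direction, assume $T$ is $C$-normal. Since $T^*T$ is positive, the spectral theorem yields a unitary $U:\clh\rightarrow L^2(\mu)$ with $T^*T=U^*M_xU$, so that $E_{T^*T}=U^*E_{M_x}U$. By \cref{commute:spectral measure and conju}(\ref{spectral measure: CT}), the anti-linear operator $CT$ commutes with $E_{T^*T}$, hence $UCTU^*$ commutes with $E_{M_x}$. Because the symbol $x$ is real, $S_\mu$ also commutes with $E_{M_x}$; consequently the \emph{linear} operator $UCTU^*S_\mu$ commutes with $E_{M_x}$ and therefore with $M_x$. Using $\{M_x\}'=\{M_\vp:\vp\in L^\infty(\mu)\}$, we conclude $UCTU^*S_\mu=M_\vp$ for some $\vp\in L^\infty(\mu)$, i.e., $CT=U^*M_\vp S_\mu U$, which rearranges to $T=CU^*M_\vp S_\mu U$.

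For the converse, set $V=U^*M_\vp S_\mu U$, so that $V$ is bounded anti-linear and $T=CV$ is bounded linear. A short computation, using $S_\mu^2=I$ and the fact that $S_\mu$ commutes with $M_{|\vp|^2}$ (since $|\vp|^2$ is real-valued), gives
\begin{equation*}
V^{\#}V = U^*S_\mu M_{\bar\vp}M_\vp S_\mu U = U^*M_{|\vp|^2}U = U^*M_\vp M_{\bar\vp}U = VV^{\#},
\end{equation*}
so $V$ is anti-linearly normal. Hence $T^*T=V^{\#}CCV=V^{\#}V$ and $TT^*=CVV^{\#}C$, giving $CT^*TC=CV^{\#}VC=CVV^{\#}C=TT^*$; equivalently $C|T|C=|T^*|$, so $T$ is $C$-normal.

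The main delicate step is the identification $UCTU^*S_\mu=M_\vp$, which rests on $\{M_x\}'=\{M_\vp:\vp\in L^\infty(\mu)\}$ and therefore on the simple-spectrum convention already used in the proof of \cref{Commute conju and T^*T}. Under nontrivial multiplicity one would instead pass to the direct-sum representation of \cref{Dsum_Normal} and read off $UCTU^*$ from an operator-matrix description of the commutant in the spirit of \cref{conju commute positive}, but the argument is structurally unchanged.
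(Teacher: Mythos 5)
Your forward direction is exactly the paper's argument: invoke (\ref{spectral measure: CT}) of \cref{commute:spectral measure and conju}, transfer to the $L^2(\mu)$ model of $T^*T$, and identify the linear operator $UCTU^*S_\mu$ with some $M_\vp$ via the commutant of $M_x$. The paper stops after that implication and never argues the converse; your verification that $V=U^*M_\vp S_\mu U$ is anti-linearly normal, whence $CT^*TC=CV^{\#}VC=CVV^{\#}C=TT^*$, correctly supplies the missing ``if'' direction, and your closing caveat about multiplicity is well taken --- the paper's own proof also tacitly uses $\{M_x\}'=\{M_\vp:\vp\in L^\infty(\mu)\}$, which presumes a cyclic (multiplicity-one) representation of $T^*T$.
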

\begin{proof}
Let $C$ be a conjugation on $\clh$. In (\ref{spectral measure: CT}) of \cref{commute:spectral measure and conju}, we have seen that if $T$ is $C$-normal, then the anti-linear operator $CT$ commutes with the projection-valued spectral measure $E_{T^*T}$ associated to the positive operator $T^*T$. By continuing, in the similar fashion as earlier, we have
\begin{equation}
\begin{split}
CTE_{T^*T}(G)&=E_{T^*T}(G)CT,\text{or}\\
CTU^*E_{M_x}U&=U^*E_{M_x}U CT,\,\text{that is,}\\
UCTU^*S_\mu E_{M_x}&=E_{M_x}U CT U^*S_\mu.
\end{split}
\end{equation}
Hence the linear operator $UCTU^*S_\mu$ is a commutant of $M_x$. Therefore, there exists $\vp\in L^\infty(\mu)$ such that $UCTU^*S_\mu=M_\vp$, which implies that $T=CU^*M_\vp S_\mu U$. 	
\end{proof}
The results in Propositions \ref{Commute conju and T^*T} and \ref{C-normal structure} are motivates to ask the following:

\begin{center}
	\begin{enumerate}
		\item 	\emph{Suppose that $T$ is $C$-normal and normal, then characterize $C$.}
		\item \emph{Suppose $T$ is $C$-normal, then what is the proper description or, structure of $T$?.}
	\end{enumerate}
\end{center}
Our primary objective in this paper is to provide a detailed answer to the points raised above. In the subsequent part of this section, we clarifies all.

\subsection{Factorization of $C$-normal operators}
In this subsection, we first characterize the conjugation $C$ whenever a $C$-normal operator is normal. For a given conjugation, it is evident that a $C$-normal operator $T$ is normal if and only if $C|T|=|T|C$, that is, the conjugation operator $C$ commute with the positive operator $|T|$. Therefore, it is enough to give the description of conjugation which commute with positive operator. In \cite{Gilbreath:Wogen}, Gillbreath and Wogen proved that a conjugation $J$ commuting with the positive operator $P$ if and only if $J=\left(\bigoplus_{n=1}^{\infty} J_{n}\right) \oplus J_{\infty}$ is a direct sum of conjugation operators and for each $n$, $ J_{n}=U_{n} S_{\mu_{n}}^{(n)}$, where $U_n$ is symmetric unitary operator matrix described as in \cref{conju commute positive} (up to unitary equivalence).

 Therefore, for a $C$-normal and normal operator $T$, by considering $P=|T|$, we have proved the following:
\begin{theorem}
Let $C$ be a conjugation, and let $T\in \bh$ be $C$-normal. Then $T$ is normal if and only if up to unitary equivalence $C=\left(\bigoplus_{n=1}^{\infty} C_{n}\right) \oplus C_{\infty}$ is a direct sum of conjugations operators and for each $n$, $ C_{n}=U_{n} S_{\mu_{n}}^{(n)}$, where $U_n$ is symmetric unitary operator matrix described as in \cref{conju commute positive}.
\end{theorem}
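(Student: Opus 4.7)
The plan is to reduce the question of whether the $C$-normal operator $T$ is normal to the question of whether the conjugation $C$ commutes with the positive operator $|T|$, and then invoke the Gilbreath--Wogen classification of conjugations commuting with a positive operator, which is the remark recorded just before the theorem and rests on \cref{conju commute positive} and \cref{Dsum_Normal}.

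First I would establish the following reformulation: for any $C$-normal $T\in\bh$, $T$ is normal if and only if $C|T|=|T|C$. Indeed, $C$-normality gives $C|T|C=|T^*|$ by \cref{conjugate normal}, while normality is exactly $|T|=|T^*|$ (both being positive square roots of $T^*T=TT^*$). Combining these yields $C|T|C=|T|$, and multiplying on the right by $C$ together with $C^2=I$ gives $C|T|=|T|C$; the converse is immediate by reversing the argument, since $C|T|=|T|C$ and $C$-normality together force $|T^*|=C|T|C=|T|$.

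Having reduced matters to describing those conjugations $C$ that commute with the positive operator $P:=|T|$, I would then invoke the Gilbreath--Wogen theorem as quoted. Its underlying proof goes through three steps: (a) apply the spectral multiplicity theorem \cref{Dsum_Normal} to write $P$, up to unitary equivalence, as a direct sum $N_{\mu_\infty}^{(\infty)}\oplus N_{\mu_1}\oplus N_{\mu_2}^{(2)}\oplus\cdots$, with mutually singular scalar measures $\mu_n$ supported on $[0,\infty)$; (b) observe that any bounded operator---including an anti-linear one---commuting with such a direct sum must be block diagonal, because an off-diagonal entry would intertwine $N_{\mu_i}^{(i)}$ with $N_{\mu_j}^{(j)}$ when $\mu_i\perp\mu_j$, and evaluating it on the spectral projections of disjoint Borel sets carrying $\mu_i$ and $\mu_j$ forces it to vanish (the real-valuedness of characteristic functions makes this work for anti-linear blocks as well); (c) for each diagonal block apply \cref{conju commute positive} to obtain $C_n=U_n S_{\mu_n}^{(n)}$ with $U_n$ a symmetric unitary operator matrix.

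The main obstacle, if one were to re-prove the Gilbreath--Wogen theorem from scratch rather than cite it, is step (b), because the usual commutant vanishing argument for normal operators with mutually singular spectra must be carefully adapted to accommodate the anti-linearity of $C$ (one has to track where complex conjugation enters the functional calculus versus the pointwise conjugation $S_\mu$). Since this classification is already established in \cite{Gilbreath:Wogen} and recorded here as \cref{conju commute positive}, the proof needed in this section is essentially just the spectral reduction of the first paragraph followed by specializing the Gilbreath--Wogen statement to $P=|T|$.
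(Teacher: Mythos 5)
Your proposal is correct and follows essentially the same route as the paper: the paper likewise observes that for a $C$-normal operator $T$, normality is equivalent to $C|T|=|T|C$ (via $C|T|C=|T^*|$ and $|T|=|T^*|$), and then specializes the Gilbreath--Wogen classification of conjugations commuting with a positive operator to $P=|T|$. Your write-up is in fact slightly more complete, since the paper leaves the reduction to the commutation relation as ``evident'' and you supply the short argument.
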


Next, we provide the structure of $C$-normal operators by giving the characterization of anti-unitary operators which commute with a positive operator. In order to do so, we deploy the method developed by Gillbreath and Wogen (cf. \cite{Gilbreath:Wogen}), where they have established the complete description of complex symmetric operators up to unitary equivalence by describing the conjugation that commutes with a positive operator. Also, they have shown that complex symmetric operators sit halfway between the normal operators and $\bh$.

 In \cite[Theorem 1.6]{Wang:Zhu}, Cun Wang, Jiayin Zhao, and Sen Zhu proved that $T\in \bh$ is $C$-normal if and only if $T=CJ|T|$, where $J$ is some anti-unitary operator which commute with $|T|$. Since our purpose is to settle down the same problem for $C$-normal operator, our initial approach is to give the description of anti-unitary operator which commute with a positive operator and this approach is quite intrinsic if we look at the polar decomposition for $C$-normal operators (cf. \cite[Theorem 1.6]{Wang:Zhu}).

  The following is the factorization theorem for $C$-normal operators.
\begin{theorem}
	Let $C$ be a conjugation on $\clh$. Then
	\[\cln_C(\clh)=\{CJP: P\,\,\text{is a positive operator and $J$ is an anti-unitary operator commuting with $P$} \}.\]
\end{theorem}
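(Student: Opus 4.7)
The plan is to prove both inclusions of the claimed set equality: one direction is immediate from the polar decomposition already recorded in \cref{cpolar}, and the other is a short direct computation.

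For the inclusion $\cln_C(\clh)\subseteq \{CJP:\ldots\}$, \cref{cpolar} does the work outright. If $T$ is $C$-normal, then $T=C\tilde J|T|$ for some anti-unitary $\tilde J$ commuting with $|T|$, so setting $P=|T|$ and $J=\tilde J$ yields the desired factorization.

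For the reverse inclusion, suppose $T=CJP$ with $P\geq 0$, $J$ anti-unitary, and $JP=PJ$. Note that $T$ is linear, since two anti-linear factors and one linear factor compose to a linear operator. The first step is to compute $T^*$ carefully. Because $P$ is self-adjoint and linear, the anti-linear operator $JP$ has anti-linear adjoint $(JP)^\#=PJ^\#$ (note the reversal of order). Combined with $C^\#=C$ and the defining relation from \cref{antilinear operator adjoint}, a one-line calculation gives $T^*=PJ^\#C$. The second step is to identify $|T|$ and $|T^*|$. Using $C^2=I$ together with the anti-unitarity identities $J^\#J=JJ^\#=I$, one gets $T^*T=PJ^\#CCJP=P^2$, so $|T|=P$. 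For $TT^*$, first take the anti-linear adjoint of the relation $JP=PJ$ to obtain $PJ^\#=J^\#P$; then
\[ TT^*=CJP\cdot PJ^\#C=CJP^2J^\#C=CP^2C=(CPC)^2. \]
Since $CPC$ is positive (conjugation by the anti-unitary involution $C$ preserves positivity), uniqueness of positive square roots gives $|T^*|=CPC=C|T|C$, which is exactly the definition of $C$-normality.

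The main (and essentially only) obstacle is the book-keeping with anti-linear adjoints: the identity $(JP)^\#=PJ^\#$ and the consequence $J^\#P=PJ^\#$ of $JP=PJ$ are short but essential, and the naive ``parallel'' versions that forget to reverse the order or swap $J$ with $J^\#$ would be incorrect. Once these are set up, both inclusions follow cleanly with no further input beyond \cref{cpolar}.
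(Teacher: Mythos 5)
Your proof is correct and follows essentially the same route as the paper, which presents this theorem as an immediate consequence of \cref{cpolar}. Your explicit verification of the reverse inclusion --- checking via $(JP)^{\#}=PJ^{\#}$ and $J^{\#}P=PJ^{\#}$ that $T^*T=P^2$ and $TT^*=(CPC)^2$, so that an arbitrary positive $P$ in the factorization is forced to equal $|T|$ --- is a detail the paper leaves implicit, and you handle it correctly.
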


Here we aim to give the description of all anti-unitary operators which commute with the positive operator $P$. We shall use the spectral multiplicity theory to achieve our goal.

{\bf Case 1:} Suppose that $P$ has multiplicity one. There is a Borel measure $\mu$ whose support is the spectrum of $P$ such that $P\cong N_\mu$ ($P$ is unitarily equivalent to $N_\mu$), where $N_\mu f(t)=tf(t)$ on $L^2(\mu)$. Define the conjugation $S_\mu$ on $L^2(\mu)$ by $S_\mu f=\bar{f}$ for all $f\in L^2(\mu)$. Then it is very easy to see that $N_\mu$ is $S_\mu$-symmetric. The anti-unitary operator $J$ commute with the positive operator $N_\mu$. Therefore, by \cref{anti-commut-normal}, we have $J=US_\mu$, where $U$ is unitary satisfying $UN_\mu=N_\mu U$. Now the linear operator $U$ is unitary which commute with $P_\mu$, it follows that $U=M_\vp$ for some $\vp\in L^\infty(\mu)$ with $|\vp|=1$ a.e.. Hence we conclude that 
\[\{J: JN_\mu=N_\mu J\} =\{M_\vp S_\mu: \vp\in L^\infty(\mu)\,\,\text{and}\,\, |\vp|=1\,\text{a.e}\}. \]

{\bf Case 2:} Suppose that $P$ has uniform multiplicity $n$, $1< n\leq \infty$. Then there is a Borel measure $\mu$ such that $P\cong N^{(n)}_{\mu}$, where $N^{(n)}_{\mu}=\underbrace{N_\mu\oplus N_\mu\oplus \cdots\oplus N_\mu}_{\text{ $n$ copies}}.$ Then it is $S_\mu^{(n)}$ defines  conjugation on $L^2(\mu)^{(n)}=\underbrace{L^2(\mu)\oplus L^2(\mu)\oplus \cdots\oplus L^2(\mu)}_{\text{ $n$ copies}}$ and $N^{(n)}_{\mu}$ is $S_\mu^{(n)}$-symmetric. If $J$ is the anti-unitary operator commute with $N_\mu^{(n)}$, then by applying \cref{anti-commut-normal}, we get $J=US^{(n)}_\mu$, where $U=[U_{ij}]$ is a unitary operator on $L^2(\mu)^{(n)}$ such that $UN^{(n)}_\mu=N^{(n)}_\mu U$. Hence by using \cref{direct sum commut prop}, we get $U_{ij}=M_{\vp_{ij}}$, where $\vp_{ij}\in L^\infty(\mu)$ with $|\vp_{ij}|=1$ a.e. Thus, the anti-unitary operator $J$ commute with $N_\mu^{(n)}$ if and only if $J=[M_{\vp_{ij}}]S_\mu^{(n)}$, where $[M_{\vp_{ij}}]$ is an unitary operator matrix on $L^2(\mu)^{(n)}$.

 Therefore, by using \cref{Dsum_Normal}, from above discussion, we conclude that up to unitary equivalence,

\[P=\left(\bigoplus_{n=1}^{\infty} N_{\mu_{n}}^{(n)}\right) \oplus N_{\mu_{\infty}}^{(\infty)} \text { on } \mathcal{H}=\left(\bigoplus_{n=1}^{\infty} L^{2}\left(\mu_{n}\right)^{(n)}\right) \oplus L^{2} (\mu_{\infty})^{(\infty)},\]

and
\[\{P\}^{\prime}=\left(\bigoplus_{n=1}^{\infty}\left\{N_{\mu_{n}}^{(n)}\right\}^{\prime}\right) \oplus\left\{N_{\mu_{\infty}}^{(\infty)}\right\}^{\prime} .\]

Then $P$ is $S$-symmetric, where $S=\left(\bigoplus_{n=1}^{\infty} S_{\mu_{n}}^{(n)}\right) \oplus S_{\mu_{\infty}}^{(\infty)}$. Also, $J$ is an anti-unitary commuting with $P$ if and only if $J=\left(\bigoplus_{n=1}^{\infty} J_{n}\right) \oplus J_{\infty}$ is a direct sum of anti-unitary operators. For each $n$, $J_{n}=U_{n} S_{\mu_{n}}^{(n)}$, where $U_{n}$ unitary operator matrix. 

\begin{remark}
	\begin{enumerate}
		\item The anti-unitary operator $J_{n}=U_{n} S_{\mu_{n}}^{(n)}$, where $U_{n}$ unitary operator matrix is conjugation if and only if $U_{n}$ is symmetric unitary operator matrix.
		\item The anti-unitary operator $J_{n}=U_{n} S_{\mu_{n}}^{(n)}$, where $U_{n}$ unitary operator matrix is anti-conjugation if and only if $U_{n}$ is skew-symmetric unitary operator matrix.
	\end{enumerate}

\end{remark}

\section{Cartesian decomposition of $C$-normal operators revisited}
In this section, we shall present a different proof of \cite[Theorem 2.6]{RSV_Cart}, where Ramesh et al. have characterized $C$-normal operators by establishing the Cartesian decomposition. Here our proof is different and partially motivated from results related to the Cartesian decomposition of normal operators which is available in the book by V. S. Sundar (cf. \cite[p.~18]{Sundar_book}). From \cite[Theorem 2.3]{RSV_Cart}, we know that given a conjugation $C$, any bounded linear operator $T$ can be uniquely written as $T=A+iB$, where $A=\frac{T+CT^*C}{2}$ is $C$-symmetric and $B=\frac{T-CT^*C}{2i}$ is $C$-skew-symmetric.
\begin{theorem}\label{cart decomp C-normal}
	Let $C$ be a conjugation on $\clh$, and $T\in \clb(\clh)$ be such that $T=A+iB$, where $A$ is $C$-symmetric and $B$ is $C$-skew-symmetric. Then the following are equivalent:
	\begin{enumerate}
		\item\label{C-normal} $T$ is $C$-normal.
		\item\label{parallelogram law} $\left\|Tx\right\|^2=\left\|Ax\right\|^2+\left\|Bx\right\|^2$ for all $x\in \clh$.
		\item\label{adj_commute-1} $A^*B=B^*A$.
			\item\label{adj_commute-2} $AB^*=BA^*$.
	\end{enumerate}
In this case, $T^*T=A^*A+B^*B$ and $TT^*=AA^*+BB^*$ with $(A^*A)(B^*B)=(B^*B)(A^*A)$, and $\max \{\|A\|^2,\|B\|^2  \}\leq\|T\|^2\leq \|A\|^2+\|B\|^2$.
\end{theorem}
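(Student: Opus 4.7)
The plan is to compute $T^*T$, $TT^*$ and $CT^*TC$ explicitly in terms of $A,B,A^*,B^*$, exploiting the defining relations $A^* = CAC$ (since $A$ is $C$-symmetric) and $B^* = -CBC$ (since $B$ is $C$-skew-symmetric), and then read off the equivalences by inspection. With $T = A+iB$ and $T^* = A^* - iB^*$, the expansions are
\begin{align*}
T^*T &= A^*A + B^*B + i(A^*B - B^*A),\\
TT^* &= AA^* + BB^* - i(AB^* - BA^*).
\end{align*}
Using $CA^*C = A$, $CAC = A^*$, $CB^*C = -B$, $CBC = -B^*$ together with anti-linearity of $C$ (which flips the sign in front of the $i$), I would compute
\begin{align*}
CT^*TC &= AA^* + BB^* + i(AB^* - BA^*).
\end{align*}
Comparing with $TT^*$, the condition $CT^*TC = TT^*$, which is exactly $C$-normality, reduces to $AB^* = BA^*$, giving (\ref{C-normal}) $\Leftrightarrow$ (\ref{adj_commute-2}).

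Next I would handle (\ref{parallelogram law}) $\Leftrightarrow$ (\ref{adj_commute-1}). From the $T^*T$ expansion,
$\|Tx\|^2 = \|Ax\|^2 + \|Bx\|^2 + i\langle (A^*B - B^*A)x,x\rangle.$
The operator $S := A^*B - B^*A$ satisfies $S^* = -S$, so $\langle Sx,x\rangle$ is purely imaginary and the equality in (\ref{parallelogram law}) holds for all $x$ iff $\langle Sx,x\rangle = 0$ for all $x$. Polarization on the complex Hilbert space $\clh$ then forces $S = 0$, i.e. $A^*B = B^*A$. For (\ref{adj_commute-1}) $\Leftrightarrow$ (\ref{adj_commute-2}) I would substitute the $C$-symmetry relations: $A^*B = B^*A$ becomes $CACB = -CBCA$, i.e. $ACB + BCA = 0$; likewise $AB^* = BA^*$ becomes $-ACBC = BCAC$, and right-multiplying by $C$ again yields $ACB + BCA = 0$. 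So both conditions are the single identity $ACB + BCA = 0$, closing the equivalences.

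For the concluding identities, once (\ref{adj_commute-1}) and (\ref{adj_commute-2}) are in force the cross terms in the expansions of $T^*T$ and $TT^*$ cancel, giving $T^*T = A^*A + B^*B$ and $TT^* = AA^* + BB^*$. The commutativity $(A^*A)(B^*B) = (B^*B)(A^*A)$ I would prove by the substitution trick
\begin{equation*}
A^*A\cdot B^*B = A^*(AB^*)B = A^*(BA^*)B = (A^*B)(A^*B),
\end{equation*}
and analogously $B^*B\cdot A^*A = (B^*A)(B^*A)$, so (\ref{adj_commute-1}) yields equality. The norm estimate follows directly from (\ref{parallelogram law}): the upper bound is $\|Tx\|^2 = \|Ax\|^2 + \|Bx\|^2 \leq (\|A\|^2+\|B\|^2)\|x\|^2$, and the lower bound is $\|Tx\|^2 \ge \|Ax\|^2$ and $\|Tx\|^2 \ge \|Bx\|^2$ taken pointwise.

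The main obstacle I expect is purely bookkeeping: the anti-linearity of $C$ flips signs in front of imaginary scalars, and the skew-symmetry of $B$ contributes its own sign, so I would keep a short table of $CAC = A^*$, $CBC = -B^*$, $CA^*C = A$, $CB^*C = -B$ close at hand while expanding $CT^*TC$, to be certain the cross terms in $CT^*TC$ end up with the opposite sign from those in $TT^*$ (which is what makes the equivalence with $AB^* = BA^*$ come out cleanly rather than trivially).
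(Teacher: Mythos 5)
Your proposal is correct and follows essentially the same route as the paper: expand $T^*T$ and $TT^*$, use $CAC=A^*$ and $CBC=-B^*$, and reduce all four conditions to the single identity $ACB+BCA=0$. The only cosmetic difference is that you test $C$-normality through the operator identity $CT^*TC=TT^*$, getting (1)$\Leftrightarrow$(4) directly, whereas the paper works with the quadratic forms $\|Tx\|^2=\|T^*Cx\|^2$ and passes through (2) first; you also write out proofs of the concluding identities (the factorizations of $T^*T$, $TT^*$, the commutation of $A^*A$ with $B^*B$, and the norm bounds), which the paper states without argument.
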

\begin{proof}
	Let $C$ be a conjugation, and $T=A+iB$, where $A$ is $C$-symmetric and $B$ is $C$-skew-symmetric (cf. \cite[Theorem 2.3]{RSV_Cart}). Therefore, for every $x\in \clh$
	\begin{equation}
	\begin{split}
	\left\|Tx\right\|^2=\left\|(A+iB)x\right\|^2=\left\|Ax\right\|^2+\left\|Bx\right\|^2-2\Re (i\langle Ax,Bx\rangle),
	\end{split}
	\end{equation}
	and 
	\begin{equation*}
	\begin{split}
	\left\|T^*x\right\|^2=\left\|(A^*-iB^*)x\right\|^2=\left\|A^*x\right\|^2+\left\|B^*x\right\|^2+2\Re (i\langle A^*x,B^*x\rangle).
	\end{split}
	\end{equation*}
	Since $T$ is $C$-normal, we have $\left\|Tx\right\|=\left\|T^*Cx\right\|$, and this implies 
	\begin{equation}
	\begin{split}
	\left\|Ax\right\|^2+\left\|Bx\right\|^2-2\Re (i\langle Ax,Bx\rangle)&=\left\|A^*Cx\right\|^2+\left\|B^*Cx\right\|^2+2\Re (i\langle A^*Cx,B^*Cx\rangle)\\
	&=\left\|Ax\right\|^2+\left\|Bx\right\|^2-2\Re (i\langle CAx,CBx\rangle)\\
	&=\left\|Ax\right\|^2+\left\|Bx\right\|^2-2\Re (i\langle Bx,Ax\rangle)\\
	&=\left\|Ax\right\|^2+\left\|Bx\right\|^2+2\Re (i\langle Ax,Bx\rangle)\quad(\Re z=\Re\bar{z}).
	\end{split}
	\end{equation}
	Thus for every $x\in \clh$, we have $T$ is $C$-normal if and only if $\left\|Tx\right\|=\left\|T^*Cx\right\|$ if and only if $\Re (i\langle Ax,Bx\rangle)=0$ if and only if  $\left\|Tx\right\|^2=\left\|Ax\right\|^2+\left\|Bx\right\|^2$. 
	
	Now observe that $\Re (i\langle Ax,Bx\rangle)=0$ if and only if $\langle Ax,Bx\rangle\in \mathbb{R}$ for all $x\in \clh$ if and only if $(B^*A)^*=B^*A$, that is, $A^*B=B^*A$. Since $A$ is $C$-symmetric and $B$ is $C$-skew-symmetric, we have
	$A^*B=B^*A\iff CA^*B=CB^*A \iff ACB=-BCA \iff -AB^*C=-BA^*C\iff AB^*=BA^*$. This completes the proof.
\end{proof}	
	
We characterize $C$-normal finite-dimensional truncated weighted shifts as an application of Cartesian decomposition of $C$-normal operators.
\begin{corollary}\label{cwshift}
	Let $C$ be the conjugation on $\mathbb{C}^n$ defined by 
	\begin{align}\label{standconju}
		C(z_1,z_2,\dots, z_n)=(\widebar{z_n},\widebar{z_{n-1}},\dots, \widebar{z_2},\widebar{z_1}).
	\end{align} Then
	\begin{align}\label{wshft}
		T=\displaystyle\sum_{j=1}^{n-1}\lambda_j e_j\otimes e_{j+1}
	\end{align} is $C$-normal if and only if $|\lambda_j|=|\lambda_{n-j}|$ for $j=1,2,\dots, n-1$.
\end{corollary}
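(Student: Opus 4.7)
The plan is to apply \cref{cart decomp C-normal} and reduce $C$-normality to an explicit condition on the weights. First, I would observe that the conjugation acts by $Ce_k = e_{n+1-k}$, and a direct computation on basis vectors yields
$$CT^*C = \sum_{j=1}^{n-1} \lambda_{n-j}\, e_j \otimes e_{j+1},$$
i.e., conjugating $T^*$ by $C$ produces a weighted shift of the same shape but with the weight pattern reversed through $\lambda_j \mapsto \lambda_{n-j}$. Consequently, the $C$-symmetric part $A = \tfrac{1}{2}(T + CT^*C)$ and the $C$-skew-symmetric part $B = \tfrac{1}{2i}(T - CT^*C)$ of the Cartesian decomposition are themselves weighted shifts of the same form, with weights
$$a_j = \frac{\lambda_j + \lambda_{n-j}}{2}, \qquad b_j = \frac{\lambda_j - \lambda_{n-j}}{2i}, \qquad j = 1,\dots,n-1.$$

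By \cref{cart decomp C-normal}, $T$ is $C$-normal if and only if $A^*B = B^*A$. Both products $A^*B$ and $B^*A$ are easily seen to be diagonal: composing a weighted shift of this pattern with the adjoint of another weighted shift of the same pattern pairs only matching indices. Their nonzero diagonal entries are $\overline{a_j}\,b_j$ and $\overline{b_j}\,a_j$, respectively, so the condition $A^*B = B^*A$ collapses to requiring $\overline{a_j}\,b_j \in \mathbb{R}$ for every $j = 1,\dots,n-1$.

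Finally, a short algebraic simplification gives
$$\overline{a_j}\,b_j = \frac{|\lambda_j|^2 - |\lambda_{n-j}|^2}{4i} + \tfrac{1}{2}\operatorname{Im}\bigl(\lambda_j \overline{\lambda_{n-j}}\bigr),$$
in which the second summand is always real while the first is purely imaginary unless $|\lambda_j|^2 = |\lambda_{n-j}|^2$. Hence $\overline{a_j}\,b_j$ is real if and only if $|\lambda_j| = |\lambda_{n-j}|$, yielding the stated characterization; because $j \mapsto n-j$ pairs indices symmetrically, no redundant constraint arises. The main obstacle is purely bookkeeping: correctly identifying $CT^*C$ as a weighted shift with reversed weights and handling the boundary index where the truncation annihilates a basis vector. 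No conceptual difficulty is anticipated.
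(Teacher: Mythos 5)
Your proof is correct and follows essentially the same route as the paper: compute $CT^*C$ as the weight-reversed shift, form the Cartesian parts $A$ and $B$, and reduce $C$-normality to a diagonal identity whose entries force $|\lambda_j|=|\lambda_{n-j}|$. The only (immaterial) difference is that you test condition $A^*B=B^*A$ while the paper tests the equivalent condition $AB^*=BA^*$; both yield the same conclusion by the same algebra.
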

\begin{proof}
	To prove this, we use the Cartesian decomposition of $C$-normal operators (See Theorem \ref{cart decomp C-normal}). We write $T=A+iB$, where
	\begin{align*}
		A=\frac{T+CT^*C}{2}=\frac{1}{2}\displaystyle\sum_{j=1}^{n-1}(\lambda_{n-j}+\lambda_j)e_j\otimes e_{j+1}
	\end{align*}
	and 
	\begin{align*}
		B=\frac{T-CT^*C}{2i}=\frac{1}{2i}\displaystyle\sum_{j=1}^{n-1}(\lambda_j-\lambda_{n-j})e_j\otimes e_{j+1}.
	\end{align*} 
	Also note that
	$$AB^*=\displaystyle\sum_{j=1}^{n-1}\frac{(\lambda_{n-j}+\lambda_j)(\bar{\lambda_{n-j}}-\bar{\lambda_j})}{4i}e_j\otimes e_j$$ and
	$$BA^*=\displaystyle\sum_{j=1}^{n-1}\frac{(\lambda_j-\lambda_{n-j})(\bar{\lambda_{n-j}}+\bar{\lambda_j})}{4i}e_j\otimes e_j.$$
	The condition $AB^*=BA^*$ implies
	\begin{align*}
		(\lambda_{n-j}+\lambda_j)(\bar{\lambda_{n-j}}-\bar{\lambda_j})=(\lambda_j-\lambda_{n-j})(\bar{\lambda_{n-j}}+\bar{\lambda_j})
	\end{align*} and this implies $|\lambda_{n-j}|=|\lambda_j|$ for all $1\leq j\leq n-1$. Hence the conclusion follows.
\end{proof}

\begin{example}
	Let $C$ be  conjugation defined by  $Cf(t)=\overline{f(1-t)}$ for all $t\in [0,1]$ and consider the multiplication operator $M_\vp f=\vp f$ for all $f\in L^2[0,1]$. Then $M_\vp\in \clb(L^2[0,1])$ for $\vp\in L^\infty$. Then \begin{equation*}
		A=\frac{1}{2}M_{\psi_{1}}\quad\text{and}\quad 	B=\frac{1}{2i}M_{\psi_{2}},
	\end{equation*}
	where $\psi_1(t)=\vp(t)+\vp(1-t)$ and $\psi_2(t)=\vp(t)-\vp(1-t)$. Therefore, by Theorem \ref{cart decomp C-normal}, $M_\vp$ is $C$-normal if and only if $A^*B=B^*A$ that is $\bar{\psi_{1}}\psi_2=\bar{\psi_{2}}\psi_1$ which implies $|\vp(1-t)|^2=|\vp(t)|^2$.
\end{example}	
	
	Next, we shall give an application of Cartesian decomposition of $C$-normal operators.
\subsection{Singular value inequalities for compact $C$-normal operators}

Let $T\in \bh$ be a compact operator. Then the singular
values of $T$, denoted by $s_1(T),s_2(T),\dots$ are the eigenvalues of the positive operator $|T|=(T^*T)^{\frac{1}{2}}$
enumerated as $s_1(T)\geq s_2(T)\geq \dots$, and repeated according to multiplicity. Note that $s_j(T) = s_j(T^*) =s_j(|T|)$ for $j = 1, 2,\dots $. It follows by Weyl's monotonicity principle (cf. \cite[p.~63]{Bhatia}) that
if $S,T$ are positive compact operators and $S \leq T$ (that is, $T-S$ is a positive operator), then $s_j(S) \leq s_j(T)$ for $j = 1, 2,\dots$.

\begin{theorem}\label{singular value ineq}
Let $C$ be a conjugation, and $T$ be a compact $C$-normal operator, where $T=A+iB$ be the Cartesian decomposition of $T$. Then 
\[ \frac{1}{\sqrt{2}}s_j(|A|+|B|)\leq s_j(T)\leq s_j(|A|+|B|),  \]
for $j=1,2,\dots$.
\end{theorem}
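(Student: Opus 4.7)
The plan is to reduce the inequality to two operator inequalities between $|T|$ and $|A|+|B|$, and then apply Weyl's monotonicity principle. The crucial structural input is supplied by \cref{cart decomp C-normal}: for a $C$-normal operator with Cartesian parts $A,B$, we have $T^*T = A^*A+B^*B$ and $(A^*A)(B^*B)=(B^*B)(A^*A)$. In particular, $|A|^2$ and $|B|^2$ commute, so by the continuous functional calculus their positive square roots $|A|$ and $|B|$ commute as well. Moreover, $|T|^2 = |A|^2+|B|^2$, and therefore $|T|$ (being a function of $|T|^2$) commutes with $|A|$, $|B|$, and $|A|+|B|$.

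Next I would expand
\[
(|A|+|B|)^2 \;=\; |A|^2+|B|^2+2|A||B| \;=\; |T|^2+2|A||B|.
\]
For the upper estimate $s_j(T)\leq s_j(|A|+|B|)$, note that $2|A||B|\geq 0$, so $(|A|+|B|)^2\geq |T|^2$. Since $|A|+|B|$ and $|T|$ are commuting positive operators, this yields $|A|+|B|\geq |T|$, and then Weyl's monotonicity principle gives
\[
s_j(T) \;=\; s_j(|T|) \;\leq\; s_j(|A|+|B|).
\]

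For the lower estimate, the key inequality is the operator AM--GM bound $2|A||B|\leq |A|^2+|B|^2$, valid because $|A|$ and $|B|$ are commuting positive operators (apply the scalar inequality $2ab\leq a^2+b^2$ in their joint functional calculus). Consequently,
\[
(|A|+|B|)^2 \;=\; |T|^2 + 2|A||B| \;\leq\; |T|^2 + |T|^2 \;=\; 2|T|^2,
\]
and again using that $|A|+|B|$ commutes with $|T|$, we deduce $|A|+|B|\leq \sqrt{2}\,|T|$. Applying Weyl's monotonicity once more gives $s_j(|A|+|B|)\leq \sqrt{2}\, s_j(T)$, i.e. the left-hand inequality of the statement.

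The only subtle point is the commutativity of $|A|$ and $|B|$, on which the whole argument rests; once this is in place the proof is an exercise in combining the identity $|T|^2=|A|^2+|B|^2$ with the scalar inequalities $0\leq 2ab\leq a^2+b^2$ via functional calculus and Weyl's monotonicity. No compactness is needed for the operator inequalities themselves; compactness is only invoked at the end to speak of the singular value sequences.
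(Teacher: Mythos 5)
Your argument is correct and follows essentially the same route as the paper: both proofs extract from \cref{cart decomp C-normal} the identities $|T|^2=|A|^2+|B|^2$ and the commutation of $|A|$ with $|B|$, establish the operator inequalities $\tfrac{1}{\sqrt{2}}\bigl(|A|+|B|\bigr)\leq |T|\leq |A|+|B|$, and conclude by Weyl's monotonicity principle. Your write-up merely makes explicit the functional-calculus justification (the scalar bounds $0\leq 2ab\leq a^2+b^2$ applied to the commuting positive operators $|A|,|B|$) that the paper leaves implicit.
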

\begin{proof}
Since $T$ is a compact $C$-normal operator, it follows from \cref{cart decomp C-normal} that $T=A+iB$, where $A$ is $C$-symmetric and $B$ is $C$-skew-symmetric such that $A^*B=B^*A$ and $|T|=\sqrt{|A|^2+|B|^2}$ with $|A||B|=|B||A|$. Thus we have 
\begin{equation}
\frac{1}{\sqrt{2}}(||A|+|B||)\leq \sqrt{|A|^2+|B|^2}=|T|\leq |A|+|B|.
\end{equation}
Hence, by using Weyl's monotonicity principle the conclusion follows. 
\end{proof}
\begin{remark}
	The equality holds in the right hand side of the inequality in \cref{singular value ineq} if either $A=0$, or $B=0$.
\end{remark}
\begin{corollary}
Let $T\in \bh$ be a compact $C$-normal with the Cartesian decomposition $T=A+iB$. Then $2s_j(BA^*)=2s_j(AB^*)\leq s_j((T^*T)\oplus(TT^*) )$ for all $j=1,2,\dots$.
\end{corollary}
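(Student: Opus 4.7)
The plan is to bound $2s_j(BA^*)$ directly by $s_j(T^*T)$ via an operator AM--GM argument, and then to dominate $s_j(T^*T)$ by $s_j(T^*T \oplus TT^*)$ through Courant--Fischer. Throughout I use the structural consequences of $T$ being $C$-normal compiled in \cref{cart decomp C-normal}: $BA^* = AB^*$, $T^*T = A^*A + B^*B$, and $(A^*A)(B^*B) = (B^*B)(A^*A)$; the last of these forces the positive square roots $|A|$ and $|B|$ to commute.

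The equality $s_j(BA^*) = s_j(AB^*)$ is immediate since $BA^* = AB^*$. For the inequality, I first establish $s_j(BA^*) = s_j(|A||B|)$ by computing
\begin{align*}
s_j(BA^*)^2 &= s_j\bigl((BA^*)(BA^*)^*\bigr) = s_j(BA^* \cdot AB^*) = s_j(B|A|^2 B^*) \\
&= s_j(|B|^2 |A|^2) = s_j\bigl((|A||B|)^2\bigr) = s_j(|A||B|)^2,
\end{align*}
where I have used $(BA^*)^* = AB^*$, the cyclic invariance $s_j(XY) = s_j(YX)$ of singular values, and the commutativity of $|A|$ with $|B|$ (which makes $|A||B|$ positive and equates $|B|^2|A|^2$ with $(|A||B|)^2$).

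Since $|A|$ and $|B|$ are commuting positive compact operators, $(|A|-|B|)^2 \geq 0$ yields the operator AM--GM bound $2|A||B| \leq |A|^2 + |B|^2 = T^*T$; Weyl's monotonicity principle then gives $2s_j(|A||B|) \leq s_j(T^*T)$, so $2s_j(BA^*) \leq s_j(T^*T)$. To finish, $s_j(T^*T) \leq s_j(T^*T \oplus TT^*)$ follows from the Courant--Fischer min--max characterisation: the top $j$ eigenvectors of $T^*T$, embedded in $\clh \oplus \{0\} \subset \clh \oplus \clh$, span a $j$-dimensional subspace on which $T^*T \oplus TT^* \geq s_j(T^*T)\,I$. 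Chaining these two bounds proves the corollary.

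The single non-routine step is the identity $s_j(BA^*) = s_j(|A||B|)$; once one rewrites $|BA^*|^2 = B|A|^2 B^*$ and recognises, via cyclic invariance and commutativity of $|A|$ with $|B|$, that its nonzero spectrum coincides with that of $(|A||B|)^2$, the remaining ingredients (scalar AM--GM lifted to commuting positive operators and the min--max principle) are standard.
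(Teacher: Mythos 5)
Your proof is correct, and it takes a genuinely different route from the paper's. The paper's argument is a one-line substitution: it quotes the Bhatia--Kittaneh arithmetic--geometric mean inequality $s_j(AB^*+BA^*)\leq s_j\bigl((A^*A+B^*B)\oplus(AA^*+BB^*)\bigr)$ from \cite{Bhatia:Kittaneh} and feeds in the Cartesian identities $AB^*=BA^*$, $T^*T=A^*A+B^*B$, $TT^*=AA^*+BB^*$ from \cref{cart decomp C-normal}. You instead exploit the extra commutativity $(A^*A)(B^*B)=(B^*B)(A^*A)$ recorded there, identify $s_j(BA^*)$ with $s_j(|A||B|)$, and apply the AM--GM inequality for commuting positive operators together with Weyl monotonicity. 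This buys the strictly stronger bound $2s_j(BA^*)\leq s_j(T^*T)$, from which the corollary follows since $s_j(T^*T)\leq s_j(T^*T\oplus TT^*)$; in fact $T^*T$ and $TT^*$ share their nonzero eigenvalues, so the direct sum merely doubles multiplicities and $s_j(T^*T\oplus TT^*)=s_{\lceil j/2\rceil}(T^*T)$. One justification should be repaired before you rely on it elsewhere: singular values are \emph{not} cyclically invariant in general ($s_j(XY)\neq s_j(YX)$ for arbitrary $X,Y$). What you actually need, and what is true in your chain, is that $XY$ and $YX$ always share their nonzero eigenvalues, and that each operator appearing there --- $B|A|^2B^*$ and $|B|^2|A|^2=(|A||B|)^2$, the latter positive precisely because $|A|$ and $|B|$ commute --- is positive, so its singular values coincide with its eigenvalues. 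With that phrasing corrected, your argument is complete and yields a sharper estimate than the one proved in the paper.
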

\begin{proof}
	 It is well known that if $A,B$ are compact operators, then
	\begin{equation}\label{Bhati-Kitta ineq}
	s_j(AB^*+BA^*)\leq s_j((A^*A+B^*B)\oplus (AA^*+BB^*)),
	\end{equation}
	for $j=1,2,\dots$ (cf. \cite{Bhatia:Kittaneh}).
	Therefore, if $T\in \bh$ is compact $C$-normal with the Cartesian decomposition $T=A+iB$, by using \cref{cart decomp C-normal}, we have $T^*T=A^*A+B^*B$ and $TT^*=AA^*+BB^*$, and hence by \cref{Bhati-Kitta ineq}, we have $2s_j(BA^*)=2s_j(AB^*)\leq s_j((T^*T)\oplus(TT^*) )$ for $j=1,2,\dots$.
\end{proof}

\begin{lemma}
	Let $C$ be a conjugation, and $A\in \bh$ such that $A^2-{A^*}^2$ is $C$-skew-symmetric. Then $T=A+iA^*$ is $C$-normal if and only if $A$ is $C$-normal.
\end{lemma}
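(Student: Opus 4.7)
My plan is to use Ptak's characterization of $C$-normality (\cref{ptak_charac}, item~(5)): $T$ is $C$-normal iff $CT^*T=TT^*C$, combined with a direct computation. First, I would expand
\[
T^*T = (A^*-iA)(A+iA^*) = A^*A + AA^* + i\bigl((A^*)^2-A^2\bigr) = A^*A + AA^* - iS,
\]
where $S:=A^2-(A^*)^2$. The same computation gives $TT^* = A^*A + AA^* - iS$, so $T$ is already linearly normal and the $C$-normality condition reduces to $CT^*TC = T^*T$, i.e., to $T^*T$ being $C$-symmetric.

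Next I would unpack the hypothesis. Since $S^* = (A^*)^2 - A^2 = -S$, the $C$-skew-symmetry of $S$ (that is, $CS^*C = -S$) simplifies to $CSC = S$. Anti-linearity of $C$ then yields $C(iS)C = -iS$, so
\[
CT^*TC \,=\, C(A^*A)C + C(AA^*)C + iS,
\]
and the $C$-normality of $T$ collapses to the single operator identity
\[
C(A^*A)C + C(AA^*)C \,=\, A^*A + AA^* - 2iS.
\]

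Finally, I would match this identity with the $C$-normality of $A$. By definition $A$ is $C$-normal iff $C(A^*A)C = AA^*$, and by \cref{ptak_charac} applied to $A^*$ this is equivalent to $C(AA^*)C = A^*A$; adding these two equations recovers the displayed identity (with the hypothesis on $S$ controlling the $-2iS$ term), which gives the forward implication. For the converse, my plan is to conjugate the displayed identity by $C$ to obtain a second operator equation and then use the positivity of $C(A^*A)C$ and $C(AA^*)C$ together with the hypothesis $CSC = S$ to disentangle the combined equation into the two individual $C$-normality identities for $A$. The main obstacle will be this final splitting step: a single operator identity in general carries strictly less information than two, so the proof hinges on exploiting both the precise form of the $-2iS$ correction and the positivity structure of the summands to recover the separate $C$-normality equations for $A$.
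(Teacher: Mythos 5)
Your reduction is correct, and it is exactly the ``direct calculation'' the paper alludes to (the paper gives no argument beyond that phrase): $T^*T=TT^*=A^*A+AA^*-iS$ with $S=A^2-(A^*)^2$, and since $S^*=-S$ the hypothesis $CS^*C=-S$ reads $CSC=S$, so $C$-normality of $T$ is equivalent to the single identity
\[
C(A^*A)C+C(AA^*)C=A^*A+AA^*-2iS.
\]
But both implications you then sketch have genuine gaps. For the forward direction, adding $C(A^*A)C=AA^*$ and $C(AA^*)C=A^*A$ gives $C(A^*A)C+C(AA^*)C=A^*A+AA^*$, which matches the displayed identity only when $S=0$: the hypothesis $CSC=S$ does not ``control'' the $-2iS$ term, it merely leaves that term invariant under conjugation by $C$. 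For the converse, the proposed ``splitting'' cannot be carried out: conjugating the displayed identity by $C$ and using $CSC=S$ reproduces the same identity, so no second equation appears, and a sum of two positive operators does not determine its summands.

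In fact your (correct) reduction shows the stated equivalence fails in both directions, so no argument can close these gaps without changing the hypotheses. Take $C$ to be entrywise conjugation on $\C^2$. With $A=\begin{pmatrix}1&1\\-1&1\end{pmatrix}$, the operator $A$ is real and normal, hence $C$-normal; $S=\begin{pmatrix}0&4\\-4&0\end{pmatrix}$ is real and skew-adjoint, hence $C$-skew-symmetric; yet $T^*T=\begin{pmatrix}4&-4i\\4i&4\end{pmatrix}$ is not fixed by entrywise conjugation, so $T=A+iA^*$ is not $C$-normal. Conversely, with $A=\begin{pmatrix}0&1\\0&0\end{pmatrix}$ one has $S=0$ (trivially $C$-skew-symmetric) and $T=\begin{pmatrix}0&1\\i&0\end{pmatrix}$ is unitary, hence $C$-normal, while $CA^*AC=A^*A\neq AA^*$, so $A$ is not $C$-normal. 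The obstacle you flag at the end of your proposal is therefore not a technicality to be overcome but a genuine obstruction; the forward implication would require the stronger hypothesis $S=0$ (equivalently, $A^2$ self-adjoint), and the reverse implication needs an additional assumption beyond anything about $S$ alone.
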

\begin{proof}
	The desired result follows easily from a direct calculation. 
\end{proof}
\begin{remark}
 $T\in \bh$ is conjugate normal if and only if $iT$ is conjugate normal.
\end{remark}

\subsection{Some inequalities for self-commutator}

Let $T\in \bh$ be a $C$-normal operator with the Cartesian decomposition $T=A+iB$ as in \cref{cart decomp C-normal}. Then the following are true:
\begin{enumerate}
	\item $\|T^*T-TT^*\|\leq 2\|A\|\min\{\|A-A^*\|,\|A+A^*\|  \}+2\|B\|\min\{\|B-B^*\|,\|B+B^*\|  \}$. We get it by using \cite[Corollary 1]{Dragomir}.
	\item If $\alpha,\beta,\gamma,\delta\in \C$ such that $C_{\alpha,\beta}(A)$ and $C_{\gamma,\delta}(B)$ are accretive (cf. \cite[Section 3]{Dragomir-LAA}), then  $\|T^*T-TT^*\|\leq \frac{1}{2}|\alpha-\beta|^2+\frac{1}{2}|\gamma-\delta|^2$. We get it by using \cite[Theorem 6]{Dragomir}.
\end{enumerate}

\begin{center}
	\textbf{Acknowledgements}
\end{center}
The research of the author is supported by the NBHM postdoctoral fellowship, Department of Atomic Energy (DAE), Government of India (File No: 0204/16(21)/2022/R\&D-II/11995).


\begin{thebibliography}{}
	\bibitem{Bhatia}
	R. Bhatia, {\it Matrix analysis}, Graduate Texts in Mathematics, 169, Springer-Verlag, New York, 1997. 
	
	\bibitem{Bhatia:Kittaneh}
	R. Bhatia\ and\ F. Kittaneh, The matrix arithmetic-geometric mean inequality revisited, Linear Algebra Appl. {\bf 428} (2008), no.~8-9, 2177--2191.
	
\bibitem{Bhuia}
S. R. Bhuia, A class of $C$-normal weighted composition operators on Fock space $\clf^2(\C)$, J. Math. Anal. Appl. {\bf 508} (2022), no.~2, Paper No. 125896, 14 pp.

\bibitem{Conway:Book:Functional analysis}
J. B. Conway, {\it A course in functional analysis}, second edition, Graduate Texts in Mathematics, 96, Springer-Verlag, New York, 1990. 

\bibitem{Dragomir}
S. S. Dragomir, Some inequalities for commutators of bounded linear operators in Hilbert spaces, Filomat {\bf 25} (2011), no.~2, 151--162. 

\bibitem{Douglas Lemma}
R. G. Duglas, {\em On majorization, factorization, and range inclusion of operators on Hilbert space}. Proc. Amer. Math. Soc. {\bf 17} (1966), 413–415.
\bibitem{Garcia:Putinar-1}
S. Garcia and M. Putinar, {\em Complex symmetric operators and applications}, Trans. Amer. Math. Soc. {\bf 358} (2006), 1285--1315.

\bibitem{Garcia:Putinar-2}S. R. Garcia\ and\ M. Putinar, Complex symmetric operators and applications. II, Trans. Amer. Math. Soc. {\bf 359} (2007), no.~8, 3913--3931. 

\bibitem{Garcia:Prodan:Putinar}
S. R. Garcia, E. Prodan\ and\ M. Putinar, Mathematical and physical aspects of complex symmetric operators, J. Phys. A {\bf 47} (2014), no.~35, 353001, 54 pp. 

\bibitem{Garcia:AAEPRI}
S. R. Garcia, Approximate antilinear eigenvalue problems and related inequalities, Proc. Amer. Math. Soc. {\bf 136} (2008), no.~1, 171--179.
	\bibitem{garciaCSPI}
S. R. Garcia\ and\ W. R. Wogen, {\em Complex symmetric partial isometries}, J. Funct. Anal. {\bf 257} (2009), no.~4, 1251--1260.

\bibitem{Gilbreath:Wogen}
T. M. Gilbreath\ and\ W. R. Wogen, Remarks on the structure of complex symmetric operators, Integral Equations Operator Theory {\bf 59} (2007), no.~4, 585--590.


\bibitem{Ko et. al}
E. Ko, J. E. Lee\ and\ M.-J. Lee, {\it On properties of $C$-normal operators}, Banach J. Math. Anal. {\bf 15} (2021), no.~4, Paper No. 65, 17 pp.

\bibitem{Li: Zhu}
C. G. Li\ and\ S. Zhu, Skew symmetric normal operators, Proc. Amer. Math. Soc. {\bf 141} (2013), no.~8, 2755--2762. 

\bibitem{Liu:Zhu-1}
T. Liu\ et al., An interpolation problem for conjugations, J. Math. Anal. Appl. {\bf 500} (2021), no.~1, Paper No. 125118, 11 pp.

\bibitem{Liu:Zhu-2}
T. Liu, X. Xie\ and\ S. Zhu, An interpolation problem for conjugations II, Mediterr. J. Math. {\bf 19} (2022), no.~4, Paper No. 153, 13 pp. 



\bibitem{Ptak}M. Ptak, K. Simik\ and\ A. Wicher, {\it $C$-normal operators}, Electron. J. Linear Algebra {\bf 36} (2020), 67--79.


\bibitem{RSV_Cart}
G. Ramesh, B. Sudip Ranjan and D. Venku Naidu, {\it Cartesian decomposition of $C$-normal operators}, Linear and Multilinear Algebra, (2021), https://doi.org /10.1080 /03081087.2021.1967847.

\bibitem{RSVcptrpn}
G. Ramesh, B. Sudip Ranjan and D. Venku Naidu, {\it A Representation of compact $C$-normal Operators}, Linear and Multilinear Algebra, (2022) https://doi.org/10.1080/03081087.2022.2065234.

\bibitem{RSV:C-Polar}
G.~Ramesh, B.~Sudip~Ranjan, and D.~Venku~Naidu.
\newblock On the {$C$}-polar decomposition of operators and applications.
\newblock {\em Monatsh. Math.}, 202(3):583--598, 2023.

\bibitem{Sundar_book}
V. S. Sunder, {\it Operators on Hilbert space}, Texts and Readings in Mathematics, 71, Hindustan Book Agency, New Delhi, 2015.

\bibitem{Sen Zhu: Skew weighted shift}
S. Zhu, {\it Skew symmetric weighted shifts}, Banach J. Math. Anal. {\bf 9} (2015), no.~1, 253--272.

\bibitem{Zhu:Li: symmetric weighted shift}
S. Zhu\ and\ C. G. Li, Complex symmetric weighted shifts, Trans. Amer. Math. Soc. {\bf 365} (2013), no.~1, 511--530. 

\bibitem{Wang:Zhu}
C. Wang, J. Zhao\ and\ S. Zhu, Remarks on the structure of $C$-normal operators, Linear Multilinear Algebra {\bf 70} (2022), no.~9, 1682--1696. 

\bibitem{Wang:Xie:Yan:Zhu}
 C. Wang\ et al., An interpolation problem for conjugations III, Banach J. Math. Anal. {\bf 16} (2022), no.~4, 61.


\end{thebibliography}
\end{document}